\newcommand{\bp}{\bsp}
\newcommand{\lcm}{\operatorname{lcm}}
\newcommand{\myceil}[1]{\left\lceil {#1} \right\rceil}
\title{On a certain generalization of triangle singularities}
\author{Kenji Hashimoto, Hwayoung Lee, Kazushi Ueda}
\date{}
\begin{document}

\maketitle

\begin{abstract}
Triangle singularities are Fuchsian singularities
associated with von Dyck groups,
which are index two subgroups of Schwarz triangle groups.
Hypersurface triangle singularities are classified by Dolgachev,
and give $14$ exceptional unimodal singularities
classified by Arnold.
We introduce a generalization of triangle singularities
to higher dimensions,
show that there are only finitely many hypersurface singularities of this type
in each dimension, and
give a complete list in dimension $3$.
\end{abstract}

\section{Introduction}
 \label{sc:introduction}

Let $\bp = (p,q,r)$ be a triple of positive integers satisfying
\begin{align}
 \frac{1}{p} + \frac{1}{q} + \frac{1}{r} < 1.
\end{align}
The \emph{Schwarz triangle group}
\begin{align}
 \Delta = \la a, b, c \relmid a^2 = b^2 = c^2 = (ab)^p = (bc)^q = (ca)^r = e \ra
\end{align}
is the reflection group generated by reflections
along edges of the hyperbolic triangle
in the upper half plane $\bH$
with angles $\pi/p$, $\pi/q$, and $\pi/r$.
The \emph{von Dyck group}
\begin{align}
 \Gamma = \la x, y, z \relmid x^p = y^q = z^r = x y z = e \ra
\end{align}
is the subgroup of $\Delta$ of index two
consisting of products of even numbers of reflections.
It is a cocompact Fuchsian group,
and the orbifold quotient
$\bX = [\bH/\Gamma]$
has three orbifold points
with stabilizers $\bZ/p \bZ$,
$\bZ/q \bZ$, and $\bZ/r \bZ$.
The triple $\bp = (p,q,r)$ is called the \emph{signature}
of the Fuchsian group $\Gamma$.

Smooth rational orbifolds are studied in detail
by Geigle and Lenzing \cite{Geigle-Lenzing_WPC}
under the name of \emph{weighted projective lines}.
In particular,
the orbifold $\bX$ can be described as
\begin{align}
 \bX = [ (\Spec T \setminus \bszero) / K],
\end{align}
where
\begin{align}
 T
  = \bC[X,Y,Z]/(X^p+Y^q+Z^r)
  = \bigoplus_{\veck \in L} T_\veck
\end{align}
is a two-dimensional ring
graded by the abelian group
\begin{align}
 L = \bZ \vecX \oplus \bZ \vecY \oplus \bZ \vecZ \oplus \bZ \vecc /
  (p \vecX - \vecc, q \vecY - \vecc, r \vecZ - \vecc)
\end{align}
of rank one,
which is the group of characters of the algebraic group
\begin{align}
 K = \Spec \bC[L].
\end{align}
Here the grading is given by $X \in T_\vecX$, etc.
The \emph{dualizing element} is defined by
\begin{align}
 \vecomega = \vecc - \vecX - \vecY - \vecZ,
\end{align}
and the canonical ring of the orbifold $\bX$ is given by
\begin{align}
 R = \bigoplus_{k=0}^\infty
  R_k, \qquad R_k = 
  T_{k \vecomega}.
\end{align}
The isolated singularity at the origin of the scheme $\Spec R$
is called the \emph{triangle singularity}
with signature $\bp=(p, q, r)$.

Let
$
 \frakm
  = \bigoplus_{k=1}^\infty R_k 
$
denote the irrelevant ideal of $R$.
The dimension of the vector space $\frakm / \frakm^2$ is called
the \emph{embedding dimension}
of $R$.
It is known that the embedding dimension of $R$ coincides with the minimum number of generators of $R$ (see \pref{lm:embdim}).
A graded ring is said to be a \emph{hypersurface}
if the embedding dimension is greater
than the Krull dimension by one.

\begin{theorem}[{\cite{MR0568895}, cf.~also~\cite{Milnor_3BM,MR586722}}]
 \label{th:Dolgachev}
The ring $R$ is a hypersurface
if and only if the signature $\bp$ is one of the 14 signatures
shown in \pref{tb:Dolgachev}.
\end{theorem}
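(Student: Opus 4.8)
The plan is to reduce the statement to a computation of the minimal number of generators of $R$ and then to a finite enumeration. First I would record the geometric meaning of $R$: since $\bX$ is the orbifold whose coarse space is a projective line $\bP^1$ carrying three stacky points of orders $p,q,r$ and $\vecomega$ is its dualizing element, $R$ is the canonical ring $\bigoplus_{k\ge 0}H^0(\bX,\mathcal{O}(k\vecomega))$. As $T$ is a domain (the polynomial $X^p+Y^q+Z^r$ being irreducible), $R\subseteq T$ is a two-dimensional normal graded domain, so its Krull dimension is $2$ and, by \pref{lm:embdim}, being a hypersurface is equivalent to being minimally generated by three homogeneous elements. I would moreover note that this last condition is already sufficient: if $R$ is generated by three elements then $R=\bC[x_1,x_2,x_3]/I$ with $I$ a height-one prime of a polynomial ring, hence principal, so $R$ is automatically a hypersurface. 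Thus the whole theorem reduces to deciding for which $\bp$ the embedding dimension equals $3$.

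Second, I would compute the Hilbert function. Writing $d_k=\deg\lfloor k\vecomega\rfloor=k-\lceil k/p\rceil-\lceil k/q\rceil-\lceil k/r\rceil$ and applying Riemann--Roch on $\bP^1$ to the round-down divisor $\lfloor k\vecomega\rfloor$ gives $\dim_\bC R_k=\max(0,1+d_k)$; in particular $R_k\neq 0$ exactly on a numerical semigroup $S$, and $d_k$ grows like $k(1-1/p-1/q-1/r)>0$.

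Third --- and this is the heart of the argument --- I would count minimal generators through the multiplication maps. The number of new generators in degree $k$ is $\dim_\bC R_k-\dim_\bC(\frakm^2)_k$, where $(\frakm^2)_k=\sum_j\operatorname{im}(R_j\otimes R_{k-j}\to R_k)$. Interpreting elements of $R_k$ as sections on $\bP^1$ and using that multiplication of complete linear systems on $\bP^1$ is surjective, each image equals $H^0\!\left(\lfloor k\vecomega\rfloor-E_j\right)$, where $E_j=\sum_i\left(\lceil j/m_i\rceil+\lceil (k-j)/m_i\rceil-\lceil k/m_i\rceil\right)P_i$ is an effective divisor (with $(m_1,m_2,m_3)=(p,q,r)$ and $P_1,P_2,P_3$ the three stacky points) measuring the rounding defect. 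Hence $(\frakm^2)_k$ is the span of subspaces of $R_k$ cut out by vanishing conditions at $P_1,P_2,P_3$, and the generator count becomes pure ceiling combinatorics of $(p,q,r)$. The delicate point is that extra generators can appear even in degrees where $S$ is already additively generated --- for instance for $(3,3,4)$ a generator is forced in degree $12$ because every product vanishes at the order-$4$ point --- so the true count is strictly finer than that of the semigroup $S$.

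Finally, I would impose embedding dimension $=3$. Three generator degrees $w_1,w_2,w_3$ appear, and since the canonical ring is Gorenstein with $a$-invariant $1$ the relation degree must be $h=1+w_1+w_2+w_3$ with Hilbert series $(1-t^h)/\prod_i(1-t^{w_i})$; matching this against $\max(0,1+d_k)$ yields an explicit arithmetic constraint on $(p,q,r)$. Together with $1/p+1/q+1/r<1$ this constraint has only finitely many solutions, and a direct enumeration produces exactly the fourteen triples of \pref{tb:Dolgachev}; for each I would exhibit the three generators as explicit monomials in $X,Y,Z$ and read off the single relation in degree $h$. The main obstacle is the third step: showing that products fill $R_k$ up to a precisely controlled defect at the three stacky points, so that the passage from the Hilbert function to the embedding dimension --- and hence the cut-off isolating the fourteen signatures --- is rigorous rather than heuristic.
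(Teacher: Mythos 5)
Your route is genuinely different from the paper's. The paper quotes this theorem from Dolgachev, and its own warm-up argument is an elementary case analysis: using the decomposition of $T_{k\vecomega}$ as $\Mbar(k\vecomega)$ times a degree-$\ell(k\vecomega)$ polynomial in the $\Ybar_i$, it tabulates $\dim R_k$ in low degrees for each case $p \le q \le r$ (first $p\ge 4$, then $p=3$, then $p=2$ with subcases on $q$), shows that outside fourteen candidate signatures at least four generators are forced, and verifies each candidate by exhibiting generators and the relation. Note that the paper's general machinery of Section 2 is unavailable here: \pref{pr:nu_N} fails for $n=3$ (e.g.\ $(3,4,5)$ has $\nu = 13/60$ rather than $1/N = 1/60$), which is exactly why this case is handled separately. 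Your steps are the classical geometric counterparts of that bookkeeping: the reduction to embedding dimension $3$ (with the correct UFD argument for sufficiency), the Hilbert function via Riemann--Roch on the coarse space, and the identification of $(\frakm^2)_k$ via surjectivity of multiplication of complete linear systems on the projective line. All of this is correct (one caveat: your formula for the image of $R_j \otimes R_{k-j}$ holds only when both factors are nonzero), it is essentially Dolgachev's and Wagreich's original method, and it generalizes well beyond this theorem; your $(3,3,4)$ example shows you understand the mechanism.

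The genuine gap is in your final step. You assert that matching the hypersurface Hilbert series $(1-t^h)/\prod_i (1-t^{w_i})$, $h = 1 + w_1 + w_2 + w_3$, against $\sum_k \max(0, 1+d_k)\,t^k$ ``has only finitely many solutions'' and can then be settled by ``direct enumeration.'' But the numerical consequences you actually extract --- Gorenstein symmetry, $a(R)=1$, and (letting $t \to 1$) $\nu\, w_1 w_2 w_3 = h$ --- do not visibly exclude all but finitely many signatures, and one cannot enumerate over the infinite set of signatures with $1/p+1/q+1/r<1$: finiteness of the solution set of the full power-series identity is not a routine Diophantine fact, it \emph{is} the theorem. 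What is missing is the step converting your defect-counting formula into explicit bounds on $(p,q,r)$, i.e.\ low-degree generator counts showing that large signatures force a fourth generator: for instance, $\min p_i \ge 4$ forces generators in degrees $3,4,4$ and then $\dim R_5 = 0$, whence $(4,4,4)$; $p=2$, $q \ge 6$ forces four generators by degree $6$; $(2,3,r)$ with $r \ge 12$ forces a fourth generator in degree $12$. This is precisely the content of the paper's case analysis, and without it your plan has no finite list to enumerate. Relatedly, you locate the ``main obstacle'' in making your third step rigorous, but surjectivity of multiplication of complete linear systems of nonnegative degree on the projective line is standard; the real work is the effective low-degree bookkeeping just described. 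Finally, $a(R)=1$ should be justified (e.g.\ via Demazure's description of the canonical module of a section ring, giving $K_R \cong R(1)$) rather than asserted; the paper itself only cites this fact for $n=3$.
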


\begin{table}[t]
\begin{align*}
\begin{array}{cccc}
\toprule
\text{Signature} & \text{Generators} & \text{Weights} & \text{Relation} \\
\midrule
 (2, 3, 7) & (X,Y,Z) & (21, 14, 6;42) & x^2+y^3+z^7 \\
 (2, 3, 8) & (XZ,Y,Z^2) & (15, 8, 6;30) & x^2+y^3 z+z^5 \\
 (2, 3, 9) & (X,YZ,Z^3) & (9, 8, 6;24) & x^2 z+y^3+z^4 \\
 (2, 4, 5) & (XY,Y^2,Z) & (15, 10, 4;30) & x^2+y^3+y z^5 \\
 (2, 4, 6) & (XYZ,Y^2,Z^2) & (11, 6, 4;22) & x^2+y^3 z+y z^4 \\
 (2, 4, 7) & (XY,Y^2 Z,Z^3) & (7, 6, 4;18) & x^2 z+y^3+y z^3 \\
 (2, 5, 5) & (Y^5,X,YZ) & (10, 5, 4;20) & x^2+x y^2+z^5 \\
 (2, 5, 6) & (Y^4,XZ,Y Z^2) & (6, 5, 4;16) & x y^2+x^2 z+z^4 \\
 (3, 3, 4) & (X^3,XY,Z) & (12, 8, 3;24) & x^2+x z^4+y^3 \\
 (3, 3, 5) & (X^3 Z,XY,Z^2) & (9, 5, 3;18) & x^2+x z^3+y^3 z \\
 (3, 3, 6) & (X^3,XYZ,Z^3) & (6, 5, 3;15) & x^2 z+x z^3+y^3 \\
 (3, 4, 4) & (X Y^4,X^2,YZ) & (8, 4, 3;16) & x^2+x y^2+y z^4 \\
 (3, 4, 5) & (X Y^3,X^2 Z,Y Z^2) & (5, 4, 3;13) & x y^2+x^2 z+y z^3 \\
 (4, 4, 4) & (X^4,Y^4,XYZ) & (4, 4, 3;12) & x^2 y+x y^2+z^4 \\
\bottomrule
\end{array}
\end{align*}
\caption{Hypersurface triangle singularities}
\label{tb:Dolgachev}
\end{table}

\pref{tb:Dolgachev} coincides
with the list of weighted homogeneous exceptional unimodal singularities
classified by Arnold \cite{Arnold_ICM}.
The signature in this context is called the \emph{Dolgachev number}
of the exceptional unimodal singularity.
Besides the Dolgachev number,
each exceptional unimodal singularity comes with another triple of positive integers
called the \emph{Gabrielov number},
which describes the Milnor lattice of the singularity
\cite{MR0367274}.
This leads to the discovery of \emph{strange duality}
by Arnold,
which states that exceptional unimodal singularities come in pairs
in such a way that the Dolgachev number and the Gabrielov number are interchanged.
Strange duality is described in terms of an exchange of the algebraic lattice
and the transcendental lattice of a K3 surface
by Dolgachev and Nikulin \cite{Dolgachev_IQF,Nikulin_ISBF}
and Pinkham \cite{Pinkham_strange-duality},
and is now considered as a precursor of \emph{mirror symmetry}.

In this paper,
we consider the following generalization
of triangle singularities.
Let $n$ be an integer greater than 3 and
$\bp = (p_1, \ldots, p_n)$ be a sequence of integers
called a \emph{signature}.
In what follows, we assume that $\bp$ satisfies
\begin{align}
 \sum_{i=1}^n \frac{1}{p_i} < 1.
\end{align}
Consider the ring
\begin{align}
 T = \bC[X_1, \ldots, X_n] \left/ \lb \sum_{i=1}^n X_i^{p_i} \rb \right.,
\end{align}
which is graded by the abelian group
\begin{align} \label{eq:L}
 L = \bZ \vecX_1 \oplus \cdots \oplus \bZ \vecX_n \oplus \bZ \vecc \left/ \lb p_i \vecX_i - \vecc \rb \right.
\end{align}
of rank one.
Here the grading is given by $X_i \in T_{\vecX_i}$ for all $i \in \{ 1, \ldots, n \}$.
The \emph{dualizing element} is defined by
\begin{align}
 \vecomega = \vecc - \sum_{i=1}^n \vecX_i,
\end{align}
and the \emph{canonical ring} is defined by
\begin{align}
 R =  \bigoplus_{k=0}^\infty
 R_k, \qquad R_k = 
 T_{k \vecomega}.
\end{align}
The singularity of $\Spec R$ is
a higher-dimensional generalization of triangle singularities.
The main result in this paper is the finiteness
of hypersurface singularities of this type:

\begin{theorem} \label{th:main}
For any integer $n$ greater than 3,
there are only finitely many signatures $\bp = (p_1,\ldots,p_n)$
such that $R$ is a hypersurface.
\end{theorem}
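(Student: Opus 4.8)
The plan is to reduce the hypersurface property to an exact count of generators, make that count combinatorial through a finite abelian group attached to $\bp$, and then prove finiteness in two stages — an elementary arithmetic descent and the genuinely hard lower bound on the count.

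\emph{Reduction to counting generators.}
I would first note that $\Spec R$ is the affine cone over the $(n-2)$-dimensional orbifold $\bX$ polarized by $\vecomega$, and that $T$ is an $(n-1)$-dimensional domain; hence $R \subseteq T$ is a graded domain of Krull dimension $n-1$. By \pref{lm:embdim} its embedding dimension equals its minimal number of generators, so $R$ is a hypersurface exactly when it is minimally generated by $n$ elements: writing $R = \bC[z_1,\dots,z_n]/P$ for a minimal generating set, $P$ is a height-one prime of a polynomial ring, hence principal since that ring is a unique factorization domain. Thus the theorem is equivalent to: the minimal number of generators of $R$ equals $n$ for only finitely many $\bp$.

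\emph{Combinatorial model.}
Set $G = L/\bZ\vecomega$. Substituting $\vecc = \vecomega + \sum_i \vecX_i$ into \pref{eq:L} presents $G$ as $\la \vecX_1,\dots,\vecX_n \relmid p_i\vecX_i = \textstyle\sum_j \vecX_j\ra$, i.e.\ as the cokernel of $\operatorname{diag}(p_1,\dots,p_n)-J$ with $J$ the all-ones matrix, so $G$ is finite with
\begin{align}
 |G| = \prod_i p_i - \sum_i \prod_{j\neq i} p_j = \Big(\prod_i p_i\Big)\Big(1-\sum_i \tfrac1{p_i}\Big).
\end{align}
A monomial lies in $R$ iff its $L$-degree vanishes in $G$, so $R = \bC[\Sigma]/I$ with $\Sigma=\{a\in\bZ_{\geq0}^n : \sum_i a_i\vecX_i=0 \text{ in } G\}$ a submonoid of $\bZ_{\geq0}^n$ of index $|G|$, and $I$ the degree-zero part of $(f)$, generated by the $X^bf=\sum_i X^{b+p_ie_i}$ that are homogeneous of degree in $\bZ\vecomega$. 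Hence the number of generators of $R$ is the size of the Hilbert basis of $\Sigma$ minus the dimension of the span of the linear parts of these relations in $\frakm/\frakm^2$.

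\emph{Finiteness and the main obstacle.}
The argument then splits. First, $\{\bp : |G|\leq N\}$ is finite: ordering $p_1\le\cdots\le p_n$ and writing $|G_k|=(\prod_{i\le k}p_i)(1-\sum_{i\le k}1/p_i)$ and $P_k=\prod_{i\le k}p_i$, each $|G_k|$ is a positive integer with $|G_k|=p_k|G_{k-1}|-P_{k-1}$; using $p_k\ge p_{k-1}$ one descends $|G_{n-k}|\le N+kP_{n-k-1}$, so $|G_1|=p_1-1\le N+n-1$ bounds $p_1$, and reconstructing upward via $p_k=(|G_k|+P_{k-1})/|G_{k-1}|$ bounds $p_2,\dots,p_n$ in terms of $N$ and $n$. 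It remains to prove the reverse implication — that $R$ has more than $n$ generators whenever $|G|$ is large — which is the crux. The basic mechanism is that generators below twice the minimal degree $d=\min\{k>0:R_k\neq0\}$ are unobstructed, since $\frakm^2$ vanishes there; when the generator degrees cluster (as for nearly equal $p_i$) this alone forces their number to grow with $|G|$. The difficulty is the opposite regime, where the Hilbert basis of $\Sigma$ stays small while $|G|$ grows — the ``axis-aligned'' configurations, which the relations $p_i\vecX_i=\vecc$ in fact permit only when $|G|=1$, and their near-degenerations around the Sylvester-type signatures such as $(2,3,7,43)$. The hard part will be a lower bound on $\dim_{\bC}\frakm/\frakm^2$ uniform in $\bp$ that covers this regime, controlling simultaneously the size of the Hilbert basis of $\Sigma$ and the number of generators that the single relation $f$ can eliminate; this is where I expect the essential work of the proof to lie.
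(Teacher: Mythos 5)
Your reduction to counting generators and your combinatorial model are both sound: the group $G=L/\bZ\vecomega$ is indeed finite of order $\nu\prod_i p_i$ (your determinant computation is correct), $R$ is the degree-zero part of $T$ under the induced $G$-grading, and your descent argument showing that only finitely many signatures satisfy $|G|\le C$ is correct — it is an analogue, for the invariant $\nu\prod_i p_i$, of the paper's \pref{lm:c}, which plays the same role for the invariant $\nu N$ via the equation $\sum_i 1/p_i + 1/N = 1$. But the proposal stops exactly where the proof has to start. The step you defer — that the hypersurface condition forces an arithmetic constraint ($|G|$ bounded in your formulation, $\nu = 1/N$ in the paper's) — is the entire content of the theorem, and you give no mechanism for it beyond naming the difficulty. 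Indeed, the obstruction you correctly identify (the single relation $\Fbar$ can eliminate many Hilbert-basis elements of $\Sigma$ from $\frakm/\frakm^2$, via the various multiples $\Xbar^b\Fbar$ landing in $\bZ\vecomega$-degrees) means it is not even clear that your proposed key lemma, a lower bound on $\dim_\bC\frakm/\frakm^2$ growing with $|G|$, is true as stated; a priori it is only known to hold as a vacuous consequence of the finiteness one is trying to prove.

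The paper closes this gap not by a quantitative bound but by a structural analysis of which \emph{monomials} can occur among the $n$ generators. Using the decomposition of \pref{lm:MPQ}, the projection arguments of \pref{lm:XiXj} and \pref{lm:XiXjXk} show that missing pure-power generators force extra mixed generators; the counting argument of \pref{lm:gen} then shows that at least $n-1$ of the $n$ generators must be pure powers $\Xbar_i^{q_i}$, and \pref{pr:nu_N} extracts from the structure of the one remaining generator the equation $\nu N = 1$, after which finiteness is the elementary \pref{lm:c}. Your plan contains the analogue of the elementary half but no analogue of \pref{lm:XiXj}--\pref{pr:nu_N}; without that (or some genuinely different argument controlling how the relation interacts with the Hilbert basis of $\Sigma$), the proof is incomplete.
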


Our proof of \pref{th:main} gives an algorithm
to classify all hypersurface generalized triangle singularities
for any given $n \ge 4$.
The list of hypersurface generalized triangle singularities for $n=4$
is shown in \pref{tb:dim3}.

Our proof of \pref{th:main} also gives the following:

\begin{theorem} \label{th:isolated}
For any integer $n$ greater than 3,
the generalized triangle singularity
associated with signature $\bp = (p_1, \ldots, p_n)$
is an isolated hypersurface singularity
if and only if
\begin{align} \label{eq:fraction}
 \sum_{i=1}^n \frac{1}{p_i} + \frac{1}{\prod_{i=1}^n p_i} = 1.
\end{align}
\end{theorem}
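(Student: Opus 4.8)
The plan is to realise $\Spec R$ as a finite abelian quotient of the Brieskorn--Pham hypersurface $\Spec T$ and to read off both directions from this description. First I would note that $\vecomega$ has infinite order in $L$ (its image under the rank-one degree map is positive), so the quotient $L/\bZ\vecomega$ is a finite abelian group; let $G = \Spec \bC[L/\bZ\vecomega]$ be the corresponding finite subgroup scheme of $K$. Since $T$ is non-negatively graded, $T_{k\vecomega}=0$ for $k<0$, so the invariant ring is $T^G = \bigoplus_{k\in\bZ} T_{k\vecomega} = \bigoplus_{k\ge 0} T_{k\vecomega} = R$, whence $\Spec R = (\Spec T)/G$. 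Eliminating $\vecc = \sum_i \vecX_i$ from the relations $p_i \vecX_i = \vecc$ presents $L/\bZ\vecomega$ as the cokernel of $\operatorname{diag}(p_1,\dots,p_n)-J$, where $J$ is the all-ones matrix, so
\[
 |G| = \det\!\big(\operatorname{diag}(p_1,\dots,p_n)-J\big) = \prod_{i=1}^n p_i - \sum_{i=1}^n \prod_{j\ne i} p_j .
\]
This equals $1$ exactly when \eqref{eq:fraction} holds. In that case $G$ is trivial, $R=T$ is the Brieskorn--Pham hypersurface $\sum_i X_i^{p_i}=0$, which has an isolated singularity at the origin; this settles the ``if'' direction.

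For the converse, suppose $\Spec R$ is an isolated hypersurface singularity, and consider the $G$-action on $V = \Spec T \setminus \bszero$. Writing a character in $G$ as $v=(v_1,\dots,v_n)$ subject to $v_i^{p_i} = \prod_j v_j$ for all $i$, the element $v$ acts on $X_i$ by $v_i$; its fixed locus in $\bC^n$ is the coordinate subspace spanned by the axes with $v_i=1$, and this meets $V$ in a positive-dimensional set exactly when $v$ fixes at least two coordinates, and in a divisor of $V$ only when $v$ fixes $n-1$ coordinates. A short computation shows the latter forces $v$ to be trivial (the remaining $v_{i_0}$ must equal $1$), so $G$ contains no pseudo-reflections on $V$. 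Therefore $\Spec R \setminus \bszero = V/G$ is smooth if and only if $G$ acts freely on $V$, and the isolatedness hypothesis yields that the $G$-action on $V$ is free.

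The decisive step is topological. Since both $\Spec T$ and (by hypothesis) $\Spec R$ are isolated hypersurface singularities in $\bC^n$, Milnor's theorem on the topology of isolated hypersurface singularities (see, e.g., \cite{Milnor_3BM}) shows that their links are $(n-3)$-connected, hence simply connected because $n\ge 4$. The link of $\Spec T$ is the Brieskorn manifold $\Sigma$, and the link of $\Spec R = (\Spec T)/G$ is $\Sigma/G$; as $G$ acts freely and $\Sigma$ is simply connected, $\pi_1(\Sigma/G)\cong G$. Simple connectivity of the link of $\Spec R$ then forces $G$ to be trivial, i.e. $R=T$ and \eqref{eq:fraction} holds. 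I expect the main obstacle to lie exactly here: one must justify that the intrinsic link of the quotient singularity $\Spec R$ is $\Sigma/G$ independently of the chosen hypersurface embedding, and then apply the connectivity bound to the free quotient. It is precisely this step that uses $n\ge 4$, reflecting the fact that for $n=3$ the Brieskorn link $\Sigma$ need not be simply connected, which is why the fourteen triangle singularities of \pref{tb:Dolgachev} can be hypersurfaces without $G$ being trivial.
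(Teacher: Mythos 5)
Your proof is correct, but it reaches the conclusion by a genuinely different route from the paper, and the difference is concentrated in the ``only if'' direction. The paper never leaves algebraic geometry: it can only identify $R$ with an invariant ring (\pref{lm:invariant_ring}) after the hypersurface hypothesis has been processed through the machinery of \pref{sc:main} (\pref{pr:nu_N}, \pref{cr:piqi}), and it then disproves isolatedness when $R \ne T$ by exhibiting an explicit positive-dimensional family of $A_{s-1}$-singularities along the image of the locus $X_i=X_j=0$, where $s=\gcd(p_i,p_j)>1$ is produced by the arithmetic identity $q_i=\gcd(p_i,N_i)$ (\pref{pr:isolated}); the equation \eqref{eq:fraction} is then extracted from $q_i=1$ via \eqref{eq:nuN}. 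You instead prove $R=T^G$ directly for $G=\Spec\bC[L/\bZ\vecomega]$ --- a cleaner statement than \pref{lm:invariant_ring}, since it needs no hypersurface hypothesis, and your $G$ (of order $\nu\prod_{i=1}^n p_i$ by the determinant computation, which is correct) coincides with the paper's diagonal group only when $\nu=1/N$ --- and you thereby convert the theorem into the statement $|G|=1$. Your ``only if'' then runs through topology: the absence of pseudo-reflections (your computation that fixing $n-1$ coordinates forces triviality is right) plus the slice theorem and the Chevalley--Shephard--Todd converse show that isolatedness forces a free action on $V=\Spec T\setminus\bszero$; Milnor's $(n-3)$-connectivity of links of isolated hypersurface singularities in $\bC^n$, applied to both $\Spec T$ and $\Spec R$, then kills $G\cong\pi_1(\Sigma/G)$ for $n\ge 4$, where $\Sigma$ is the link of $\Spec T$. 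The step you flag --- that the link of the germ of $\Spec R$ is well defined independently of embedding and equals $\Sigma/G$ --- is a standard but nonelementary fact (average Milnor's conical-structure vector field over $G$, then invoke uniqueness of links, e.g.\ Durfee, or argue with the local fundamental group in the sense of Mumford), so it is a citation rather than a gap. What your approach buys: it decouples \pref{th:isolated} from the classification machinery of \pref{sc:main}, it yields \eqref{eq:fraction} immediately from $|G|=1$ without the arithmetic of $N$ and $N_i$, and it explains conceptually why $n\ge 4$ is needed --- for $n=3$ links need not be simply connected, which is exactly why the fourteen signatures of \pref{tb:Dolgachev} exist. What the paper's approach buys: it is elementary and self-contained, and it gives finer local information, namely the explicit non-isolated $A_{s-1}$-locus when $R\ne T$.
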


A slight generalization
\begin{align} \label{eq:fraction2}
 \sum_{i=1}^n \frac{1}{p_i} + \frac{1}{\prod_{i=1}^n p_i} \in \bZ
\end{align}
of the Diophantine equation
\eqref{eq:fraction} is equivalent to the \emph{improper Zn\'am problem},
which asks for a sequence $(p_1, \ldots, p_n)$
of integers satisfying
$p_i \mid \prod_{j \in \{ 1, \ldots, n \} \setminus \{ i \}} p_j + 1$
for all $i \in \{ 1, \ldots, n \}$.
It is not known whether \eqref{eq:fraction2} implies \eqref{eq:fraction}.

The \emph{$a$-invariant} $a=a(R) \in \bZ$
of a graded Gorenstein ring $R$
is defined by
\begin{align}
 K_R = R(a),
\end{align}
where $K_R$ is the graded canonical module of $R$.
If $R$ is a hypersurface
generated by $n$ elements of degrees $a_1, \ldots, a_n$
with one relation of degree $h$,
then the $a$-invariant of $R$ is given by
\begin{align}
 a = h-a_1-\cdots-a_n.
\end{align}

\begin{theorem} \label{th:a}
If $R$ is a hypersurface generalized triangle singularity,
then one has
\begin{align} \label{eq:a}
 a(R) = 1.
\end{align}
\end{theorem}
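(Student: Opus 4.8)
The plan is to exploit the fact that the dualizing element $\vecomega$ plays two roles at once: it is both the degree with respect to which the section ring $R$ is formed and the $a$-invariant of $T$ in the $L$-grading. Consequently $R$ is the section ring of the dualizing sheaf of the associated orbifold, and the section ring of a dualizing sheaf has $a$-invariant $1$. I would make this precise through graded local duality, after first recording the canonical module of $T$.

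First I would compute the graded canonical module of $T$ in the $L$-grading. Writing $S = \bC[X_1, \ldots, X_n]$ with $\deg X_i = \vecX_i$, the canonical module of $S$ is $S\left(-\sum_{i=1}^n \vecX_i\right)$, and since $T = S/(f)$ with $f = \sum_{i=1}^n X_i^{p_i}$ of $L$-degree $\vecc$, the hypersurface formula applied in the $L$-grading yields $K_T \cong T\left(\vecc - \sum_{i=1}^n \vecX_i\right) = T(\vecomega)$. Passing to the quotient orbifold $\bX = [(\Spec T \setminus \bszero)/K]$ with $K = \Spec \bC[L]$, this says that the dualizing sheaf of $\bX$ is $\omega_\bX = \mathcal{O}_\bX(\vecomega)$, while by the Geigle--Lenzing correspondence $R_k = T_{k\vecomega} = H^0(\bX, \mathcal{O}_\bX(k\vecomega))$. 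Thus $R = \bigoplus_{k \ge 0} H^0(\bX, \omega_\bX^{\otimes k})$ is exactly the section ring of the dualizing sheaf.

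Next I would apply graded local duality. Since $R$ is assumed to be a hypersurface it is Cohen--Macaulay of dimension $n-1$, so that $\dim \bX = n-2$ and $(K_R)_k \cong \left(H^{n-1}_\frakm(R)_{-k}\right)^\vee \cong H^{n-2}(\bX, \mathcal{O}_\bX(-k\vecomega))^\vee$. Serre duality on $\bX$ then identifies the right-hand side with $H^0(\bX, \omega_\bX \otimes \mathcal{O}_\bX(k\vecomega)) = H^0(\bX, \mathcal{O}_\bX((k+1)\vecomega)) = R_{k+1}$, giving $K_R \cong R(1)$ and hence $a(R) = 1$. This is also consistent with the hypersurface formula $a = h - a_1 - \cdots - a_n$: the generators $u_i$ and relation $g$ of $R$ are $L$-homogeneous, and the identity $K_T = T(\vecomega)$ forces $\deg_L g - \sum_i \deg_L u_i = \vecomega$; writing each $L$-degree as an integer multiple of $\vecomega$ and comparing coefficients gives $h - \sum_i a_i = 1$ directly.

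The main obstacle is the cohomological bookkeeping in the last step: making precise the identification of the top graded local cohomology $H^{n-1}_\frakm(R)$ with sheaf cohomology on $\bX$, together with the correct orbifold form of Serre duality, in the presence of the stacky structure and the torsion of $L$. The hyperbolicity hypothesis $\sum_i 1/p_i < 1$ guarantees that $\vecomega$ has positive degree in the rank-one grading, so the $\bZ$-grading on $R$ is positive and these local-cohomological arguments are legitimate. Should the orbifold route prove delicate, the algebraic alternative---deriving the $L$-degree identity $\deg_L g - \sum_i \deg_L u_i = \vecomega$ straight from $K_T = T(\vecomega)$ via the finite extension $R \hookrightarrow T$ and the behaviour of canonical modules under the $\vecomega$-Veronese operation---reaches the same conclusion without invoking $\bX$.
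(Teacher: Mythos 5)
Your proposal is correct in substance, but it takes a genuinely different route from the paper. The paper's proof is an elementary Hilbert--series computation: using $\nu=1/N$ (\pref{pr:nu_N}, itself a consequence of the hypersurface hypothesis established in Section 2), it shows that $\lb 1-t^N \rb^{n-1}F(t)=\sum_{j=0}^{N-1}t^{\,j-N\cdot m(j)}$ is a polynomial of degree exactly $(n-1)N+1$ (\pref{lm:value_m_k} and \pref{lm:deg_F}), and comparing with the presentation $F(t)=(1-t^h)/\prod_{i=1}^n\lb 1-t^{a_i}\rb$ yields $h=\sum_{i=1}^n a_i+1$. You instead descend the adjunction identity $K_T\cong T(\vecomega)$ from $T$ to $R$, either via orbifold Serre duality and graded local duality, or via the description of $R$ as the invariant ring $T^{G'}$ for the finite group $G'=\lc \phi\in\operatorname{Hom}(L,\bC^\times)\relmid \phi(\vecomega)=1\rc$, obtaining $K_R\cong R(1)$ directly. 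What your approach buys: it bypasses \pref{pr:nu_N} and the combinatorics of Section 2 entirely, and it actually proves more, namely $K_R\cong R(1)$ for \emph{every} signature (since $R=T^{G'}$ is always Cohen--Macaulay by Hochster--Eagon, this gives Gorensteinness with $a$-invariant $1$ in general, not only in the hypersurface case). What the paper's approach buys: it is self-contained, uses nothing beyond formal power series, and yields as a by-product the identity $h/\prod_{i=1}^n a_i=1/N^{n-2}$ recorded in the paper's final remark. One point you should nail down to complete the algebraic route: the descent $K_{T^{G'}}\cong (K_T)^{G'}$ (with the grading intact) requires that $\Spec T\to\Spec R$ be \'etale in codimension one, i.e.\ that no nontrivial element of $G'$ fix a divisor in $\Spec T$. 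This does hold here --- such an element would satisfy $\phi(\vecX_j)=1$ for all $j$ except a single $i$, whence $\phi(\vecc)=\phi(\vecX_j)^{p_j}=1$, so $\phi(\vecX_i)^{p_i}=1=\phi(\vecX_i)^{p_i-1}$ and thus $\phi(\vecX_i)=1$ --- but it is not automatic and deserves an explicit check; similarly, the identification $R_k=T_{k\vecomega}=H^0(\bX,\mathcal{O}_\bX(k\vecomega))$ needs the (easy) depth and positivity observations that $T$ has depth at least $2$ at the irrelevant ideal and $T_{k\vecomega}=0$ for $k<0$. With those details supplied, your argument is a valid and more conceptual proof of \pref{th:a}.
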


The equality \eqref{eq:a} also holds for
hypersurface (ordinary) triangle singularities (cf.\ \cite[Proposition 2.8]{MR586722}).
It follows from \eqref{eq:a} that one can compactify $\Spec R$
to a weighted projective hypersurface
with trivial canonical bundle
by adding one variable of degree one.
It is an interesting problem to study
when this hypersurface admits a smoothing.

\ \\
\emph{Acknowledgements}:
We thank Korea Institute for Advanced Study
for financial support and excellent research environment.
K.~U. is supported by JSPS Grant-in-Aid for Young Scientists No.~24740043.

\begin{table}[htbp]
\begin{align*}
\begin{array}{cccc}
\toprule
\text{Signature} & \text{Generators} & \text{Weights} & \text{Relation} \\
\midrule
( 2 , 3 , 7 , 43 )  & (X,Y,Z,W)       & ( 903 , 602 , 258 , 42 ; 1806 ) & x^2+y^3+z^7+w^{43} \\
( 2 , 3 , 7 , 44 )  & (XW,Y,Z,W^2)    & ( 483 , 308 , 132 , 42 ; 966 )  & x^2+w(y^3+z^7+w^{22}) \\
( 2 , 3 , 7 , 45 )  & (X,YW,Z,W^3)    & ( 315 , 224 , 90 , 42 ; 672 )   & y^3+w(x^2+z^7+w^{15}) \\
( 2 , 3 , 7 , 49 )  & (X,Y,ZW,W^7)    & ( 147 , 98 , 48 , 42 ; 336 )    & z^7+w(x^2+y^3+w^7) \\
( 2 , 3 , 8 , 25 )  & (XZ,Y,Z^2,W)    & ( 375 , 200 , 150 , 24 ; 750 ) & x^2+z(y^3+z^4+w^{25}) \\
( 2 , 3 , 8 , 26 )  & (XZW,Y,Z^2,W^2) & ( 207 , 104 , 78 , 24 ; 414 ) & x^2+zw(y^3+z^4+w^{13}) \\
( 2 , 3 , 9 , 19 )  & (X,YZ,Z^3,W)    & ( 171 , 152 , 114 , 18 ; 456 ) & y^3+z(x^2+z^3+w^{19}) \\
( 2 , 3 , 9 , 21 )  & (X,YZW,Z^3,W^3) & ( 63 , 62 , 42 , 18 ; 186 ) & y^3+zw(x^2+z^3+w^{7}) \\
( 2 , 3 , 10 , 16 ) & (XZW,Y,Z^2,W^2) & ( 159 , 80 , 48 , 30 ; 318 ) & x^2+zw(y^3+z^5+w^8) \\
( 2 , 3 , 13 , 13 ) & (W^{13},X,Y,ZW) & ( 78 , 39 , 26 , 12 ; 156 ) & w^{13}+x(x+y^2+z^3) \\
( 2 , 4 , 5 , 21 )  & (XY,Y^2,Z,W)       & ( 315 , 210 , 84 , 20 ; 630 ) & x^{2}+y(y^2+z^5+w^{21}) \\
( 2 , 4 , 5 , 22 )  & (XYW,Y^2,Z,W^2)    & ( 175 , 110 , 44 , 20 ; 350 ) & x^{2}+yw(y^2+z^5+w^{11}) \\
( 2 , 4 , 6 , 13 )  & (XYZ,Y^2,Z^2,W)    & ( 143 , 78 , 52 , 12 ; 286 ) & x^2+yz(y^2+z^3+w^{13}) \\
( 2 , 4 , 6 , 14 )  & (XYZW,Y^2,Z^2,W^2) & ( 83 , 42 , 28 , 12 ; 166 ) & x^2+yzw(y^2+z^3+w^{7}) \\
( 2 , 4 , 7 , 10 )  & (XYW,Y^2,W^2,Z)    & ( 119 , 70 , 28 , 20 ; 238 ) & x^2+yz(y^2+z^5+w^{7}) \\
( 2 , 5 , 5 , 11 )  & (Y^5,X,YZ,W)       & ( 110 , 55 , 44 , 10 ; 220 ) & z^5+x(x+y^2+w^{11}) \\
( 2 , 5 , 5 , 15 )  & (Y^5,X,YZW,W^5)    & ( 30 , 15 , 14 , 10 ; 70 ) & z^5+xw(x+y^2+w^{3}) \\
( 2 , 5 , 6 , 8 )   & (XZW,Z^2,W^2,Y)    & ( 95 , 40 , 30 , 24 ; 190 ) & x^2+yz(y^3+z^4+w^{5}) \\
( 2 , 5 , 7 , 7 )   & (W^7,X,ZW,Y)       & ( 70 , 35 , 20 , 14 ; 140 ) & z^7+x(x+y^2+w^{5}) \\
( 2 , 7 , 7 , 7 )   & (Z^7,Y^7,X,YZW)    & ( 14 , 14 , 7 , 6 ; 42 ) & w^7+xy(x+y+z^2) \\
( 3 , 3 , 4 , 13 )  & (X^3,XY,Z,W)       & ( 156 , 104 , 39 , 12 ; 312 ) & y^3+x(x+z^4+w^{13}) \\
( 3 , 3 , 4 , 15 )  & (X^3,XYW,Z,W^3)    & ( 60 , 44 , 15 , 12 ; 132 ) & y^3+xw(x+z^4+w^{5}) \\
( 3 , 3 , 5 , 8 )   & (X^3,XY,Z,W)       & ( 120 , 80 , 24 , 15 ; 240 ) & y^3+x(x+z^5+w^{8}) \\
( 3 , 3 , 5 , 9 )   & (X^3,XYW,W^3,Z)    & ( 45 , 35 , 15 , 9 ; 105 ) & y^3+xz(x+z^3+w^{5}) \\
( 3 , 3 , 6 , 7 )   & (X^3,XYZ,Z^3,W)    & ( 42 , 35 , 21 , 6 ; 105 ) & y^3+xz(x+z^2+w^{7}) \\
( 3 , 3 , 6 , 9 )   & (X^3,XYZW,Z^3,W^3) & ( 18 , 17 , 9 , 6 ; 51 ) & y^3+xzw(x+z^2+w^{3}) \\
( 3 , 4 , 4 , 8 )   & (Z^4,YZW,W^4,X)    & ( 24 , 15 , 12 , 8 ; 60 ) & y^4+xz(x+z^2+w^{3}) \\
( 3 , 4 , 5 , 5 )   & (W^5,ZW,X,Y)       & ( 60 , 24 , 20 , 15 ; 120 ) & y^5+x(x+z^3+w^{4}) \\
( 3 , 5 , 5 , 5 )   & (W^5,Z^5,YZW,X)    & ( 15 , 15 , 9 , 5 ; 45 ) & z^5+xy(x+y+w^3) \\
( 4 , 4 , 4 , 5 )   & (Y^4,Z^4,XYZ,W)     & ( 20 , 20 , 15 , 4 ; 60 ) & z^4+xy(x+y+w^5) \\
( 4 , 4 , 4 , 8 )   & (X^4,Y^4,XYZW,W^4) & ( 8 , 8 , 7 , 4 ; 28 )  & z^4+xyw(x+y+w^2) \\
( 5 , 5 , 5 , 5 )   & (X^5,Y^5,Z^5,XYZW) & ( 5 , 5 , 5 , 4 ; 20 ) & w^5+xyz(x+y+z) \\
\bottomrule
\end{array}
\end{align*}
\caption{Hypersurface generalized triangle singularities in dimension 3}
\label{tb:dim3}
\end{table}


\clearpage

\section{Proof of \pref{th:main}}
 \label{sc:main}

Let $\bp = (p_1, \ldots, p_n)$ be a signature
such that $R$ is a hypersurface, and
\begin{align}
 \Tbar
  = \bigoplus_{\vecv \in L} \Tbar_\vecv
  = \bC \ld \Xbar_1,\ldots,\Xbar_n \rd
\end{align}
be a polynomial ring
graded by the abelian group $L$
in \eqref{eq:L}.
The Veronese subring of $\Tbar$
over $\bZ \, \vecomega$ is denoted by
\begin{align}
 \Rbar = \bigoplus_{k \in \bZ} \Rbar_k, \qquad
 \Rbar_k = \Tbar_{k \vecomega}.
\end{align}
We write
\begin{align}
 [i,j]=\{i,i+1,\ldots,j\}
\end{align}
for a pair $(i,j)$ of integers with $i \le j$.
Let
$
 \varphi \colon \Tbar \to T = \Tbar/ \lb \Fbar \rb
$
be the natural projection,
where
\begin{align}
 \Fbar
  = \sum_{i=1}^n \Ybar_i,
  \qquad
 \Ybar_i = \Xbar_i^{p_i}
 \text{ for } i \in [1,n].
\end{align}
The restriction
$
 \varphi|_\Rbar \colon \Rbar \to R = \Rbar / \lb \Fbar \rb
$
will also be denoted by $\varphi$
by abuse of notation.
We write $X_i=\varphi(\Xbar_i)$ for $i \in [1,n]$.
Any element $\vecv \in L$ can be written uniquely as
\begin{align} \label{eq:ka}
 \vecv = \ell \vecc + \sum_{i=1}^n a_i \vecX_i,
\end{align}
where $\ell \in \bZ$ and
$0 \le a_i \le p_i-1$ for any $i \in [1,n]$.
This defines functions $\ell \colon L \to \bZ$
and $a_i \colon L \to [0,p_i-1]$
for $i \in [1,n]$.
Any element of $\Tbar_\vecv$
for $\vecv \in L$ can be written as the product
$
 \Mbar(\vecv) \Pbar
$, where $\Mbar(\vecv)$ is the monomial defined by
\begin{align} \label{eq:Mbar}
 \Mbar(\vecv) = \prod_{i=1}^n \Xbar_i^{a_i(\vecv)}
\end{align}
and
$
 \Pbar \in \bC \ld \Ybar_1,\ldots,\Ybar_n \rd_\ell
$
is a homogeneous polynomial of degree $\ell$.
\begin{comment}
\begin{shaded}
Since $\{ Y_i \}_{i=1}^n$ has one relation
\begin{align}
 \sum_{i=1}^n Y_i = 0,
\end{align}
we have
\begin{align}
 \dim R_k = \binom{\ell(k \vecomega)+n-1}{n-1}
\end{align}
where
\begin{align}
 \ell(k \vecomega)
  = k - \sum_{i=1}^n \myceil{\frac{k}{p_i}}.
\end{align}
We also set
$
 M = \varphi \circ \Mbar \colon L \to T.
$
\end{shaded}
\end{comment}

Let
$
 \frakm
  = \bigoplus_{k=1}^\infty R_{k}
$
be the irrelevant ideal of $R$.
Since we assume that $R$ is a hypersurface,
 that is, $\dim_\bC \frakm/\frakm^2 = n$,
 we can choose a set
$
 \Xibar = \lc \xbar_i \rc_{i=1}^n \subset \Rbar
$
of monomials
whose image
$
 \Xi = \varphi \lb \Xibar \rb
  = \lc x_i = \varphi \lb \xbar_i \rb \rc_{i=1}^n
$
forms a basis of the vector space
$\frakm/\frakm^2$.
By the following \pref{lm:embdim}, the ring $R$ is generated by $\Xi$ over $\bC$.

\begin{lemma} \label{lm:embdim}
Let $S$ be a subset of $\frakm$ consisting of homogeneous elements,
 that is, $S \subset \bigcup_{k=1}^\infty R_k$.
Then $S$ generates the ring $R$ over $\bC$ if and only if
 the image of $S$ spans the vector space $\frakm / \frakm^2$.
\end{lemma}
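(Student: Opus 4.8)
The plan is to recognize this as the graded form of Nakayama's lemma. The structural facts I would use are that $R$ is nonnegatively graded with $R_0 = \bC$ (indeed $R_0 = T_0 = \bC$), that $\frakm = \bigoplus_{k \ge 1} R_k$, and consequently that every homogeneous element of $S$ has strictly positive degree. The grading is what makes a clean induction possible, replacing the usual finiteness/Nakayama input.

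For the forward implication (generating $\Rightarrow$ spanning) I would argue directly. If $S$ generates $R$ as a $\bC$-algebra, then $R$ is spanned over $\bC$ by monomials in the elements of $S$. Since each element of $S$ lies in $\frakm$, any such monomial that is a product of at least two factors lies in $\frakm^2$, while the empty product (the constant) does not occur in an element of $\frakm$. Hence modulo $\frakm^2$ every element of $\frakm$ is congruent to a $\bC$-linear combination of the elements of $S$ themselves, which is precisely the assertion that the image of $S$ spans $\frakm / \frakm^2$.

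For the reverse implication (spanning $\Rightarrow$ generating), the substance is an induction on degree. Let $A \subseteq R$ denote the $\bC$-subalgebra generated by $S$; I want $A = R$. Since $R_0 = \bC \subseteq A$, it suffices to show $R_k \subseteq A$ for every $k \ge 1$, which I would prove by induction on $k$ with the inductive hypothesis $R_j \subseteq A$ for all $j < k$. Given a homogeneous element $r \in R_k$, I would use that $\frakm / \frakm^2$ is itself graded, so the spanning hypothesis lets me write $r = \sum_i c_i s_i + t$ with the $s_i \in S$ homogeneous of degree $k$, $c_i \in \bC$, and $t \in (\frakm^2)_k$.

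The decisive observation — and the only place where positivity of the grading enters — is that $(\frakm^2)_k = \sum_{a+b=k,\, a,b \ge 1} R_a R_b$ involves only factors of degrees $a,b \le k-1$; by the inductive hypothesis each such factor lies in $A$, hence $t \in A$ and therefore $r \in A$. This closes the induction and yields $A = R$. I expect the only point requiring care to be exactly this bookkeeping with the grading: one must invoke that $\frakm / \frakm^2$ is graded in order to match the degree of $r$ with generators of the same degree, and that $\frakm^2$ in degree $k$ splits into products of strictly smaller positive degrees. Both rest on $R$ being positively graded with $R_0 = \bC$, and beyond this the argument is entirely formal.
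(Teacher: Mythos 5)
Your proof is correct and follows essentially the same route as the paper: the forward direction via the observation that monomials of length at least two land in $\frakm^2$, and the reverse direction by induction on degree, decomposing a homogeneous element of degree $k$ as a linear combination of degree-$k$ elements of $S$ plus an element of $(\frakm^2)_k$, which lies in the subalgebra by the inductive hypothesis. Your explicit remark that $\frakm/\frakm^2$ is graded (so the spanning hypothesis can be applied degree by degree) makes precise a step the paper leaves implicit, but the argument is the same.
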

\begin{proof}
Let $\bC S$ denote the vector space spanned by $S$.
If $S$ generates $R$, that is, $\bC[S]=R$, then
 $\frakm/\frakm^2=(\bC[S]\cap \frakm)/\frakm^2=(\bC S+\frakm^2)/\frakm^2$.
Hence the image of $S$ spans $\frakm/\frakm^2$.
Conversely, assume that the image of $S$ spans $\frakm/\frakm^2$.
In order to prove that $S$ generates $R$ by induction,
 suppose that $\bC[S]$ contains $R_k$ for all $k<n$.
Then $\bC[S] \supset \frakm^2 \cap R_n$.
Since the image of $S$ spans $\frakm/\frakm^2$, we have
 $R_n=\bC S \cap R_n + \frakm^2 \cap R_n$,
 which implies that $\bC[S]$ contains $R_n$.
Therefore we obtain $\bC[S]=R$ by induction.
\end{proof}

We also set
\begin{align}  \label{eq:df}
\begin{split}
 \nu=1-\sum_{i=1}^n \frac{1}{p_i}, \quad
 N= \lcm \lc p_i \relmid i \in [1,n] \rc, \\
 N_i=\lcm \lc p_j \relmid j \in [1,n] \setminus \{ i \} \rc, \quad
 q_i=\nu p_i N_i.
\end{split}
\end{align}

\begin{lemma} \label{lm:power}
For any $i \in [1,n]$ and any $m \in \bN$,
we have $\Xbar_i^m \in \Rbar$ if and only if $q_i \mid m$.
\end{lemma}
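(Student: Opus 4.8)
The plan is to translate the statement into the grading group $L$ and settle it by one short computation there. Since $\Rbar=\bigoplus_{k\in\bZ}\Tbar_{k\vecomega}$ and $\Xbar_i^m$ is a nonzero homogeneous element of degree $m\vecX_i$, the membership $\Xbar_i^m\in\Rbar$ is equivalent to $m\vecX_i\in\bZ\vecomega$, i.e.\ to $m\vecX_i=k\vecomega$ in $L$ for some $k\in\bZ$. Thus the lemma reduces to the assertion that $\{\,m\in\bZ\mid m\vecX_i\in\bZ\vecomega\,\}=q_i\bZ$.

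For the ``if'' direction I would first prove the single identity
\[
 q_i\vecX_i = N_i\vecomega \quad\text{in } L,
\]
which immediately gives $q_i s\,\vecX_i=N_i s\,\vecomega$ for every $s\in\bZ$, hence $q_i\mid m\Rightarrow\Xbar_i^m\in\Rbar$. To establish it, I would expand $N_i\vecomega=N_i\vecc-\sum_{j=1}^n N_i\vecX_j$ and use that $p_j\mid N_i$ for $j\neq i$, so that $N_i\vecX_j=(N_i/p_j)\vecc$ for those $j$ and the sum collapses to $A\vecc-N_i\vecX_i$ with $A=N_i-\sum_{j\neq i}N_i/p_j\in\bZ$. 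Replacing $\vecc$ by $p_i\vecX_i$ and using that $\bZ\vecX_i\subset L$ is free (as $\vecX_i$ has nonzero degree $1/p_i$), the identity reduces to the numerical equality $A p_i=q_i+N_i$, which one checks directly from the definitions of $\nu$, $N_i$, and $q_i=\nu p_i N_i$.

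For the ``only if'' direction I would extract exactly the two necessary conditions that pin $m$ down. Applying the degree homomorphism $\deg\colon L\to\bQ$, $\vecX_j\mapsto 1/p_j$, $\vecc\mapsto1$ (which is well defined since $\deg(p_j\vecX_j-\vecc)=0$) to $m\vecX_i=k\vecomega$ gives $m/p_i=k\nu$, hence $m=k\,p_i\nu$. To control $k$, I would introduce for each $j$ the character $\chi_j\colon L\to\bZ/p_j\bZ$ sending $\vecX_j\mapsto1$ and every other generator, including $\vecc$, to $0$; it respects all the relations $p_l\vecX_l-\vecc$, so it is well defined. Evaluating $\chi_j$ on $m\vecX_i=k\vecomega$ for $j\neq i$ yields $0=-k$ in $\bZ/p_j\bZ$, so $p_j\mid k$; as this holds for all $j\neq i$ we get $N_i\mid k$. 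Writing $k=N_i t$ and substituting into $m=k\,p_i\nu$ gives $m=t\,\nu p_i N_i=t\,q_i$, whence $q_i\mid m$.

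The only genuine obstacle is that $L$ typically has torsion, so the degree map is not injective and the relation $m=k\,p_i\nu$ by itself cannot force $q_i\mid m$. The characters $\chi_j$ are precisely the tool that detects the torsion-carrying $\vecX_j$-components and produces the divisibilities $p_j\mid k$; with these in hand both directions are immediate, the only real work being the bookkeeping in the numerical identity $A p_i=q_i+N_i$.
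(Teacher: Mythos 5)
Your proof is correct and is essentially the paper's own argument in different packaging: your ``if'' direction rests on the identity $N_i\vecomega = q_i\vecX_i$ in $L$, which is exactly the computation in the paper's proof, and your characters $\chi_j\colon L \to \bZ/p_j\bZ$ (for $j \ne i$), which force $p_j \mid k$ and hence $N_i \mid k$, are precisely the paper's normal-form coordinates $a_j$ from \eqref{eq:ka} taken modulo $p_j$ (the paper writes this condition as $a_j(k\vecomega) = p_j\lceil k/p_j\rceil - k = 0$ for all $j \ne i$). The only difference is cosmetic---you work with homomorphisms out of $L$ (the degree map to $\bQ$ plus torsion-detecting characters) where the paper uses the unique normal form and ceiling functions---so the key identity and the divisibility mechanism coincide.
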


\begin{proof}
Note that $\Rbar_k$ contains a pure power of $\Xbar_i$
if and only if
\begin{align} \label{eq:a=0}
 a_j(k \vecomega) = 0
 \qquad \text{for any } j \in [1,n] \setminus \{i\},
\end{align}
where $a_j \colon L \to [0,p_j-1]$ are defined
by \eqref{eq:ka}.
Since 
\begin{align}
 a_j(k \vecomega) = p_j \myceil{\frac{k}{p_j}} - k
  \qquad \text{for any } j \in [1,n],
\end{align}
the condition \eqref{eq:a=0} holds
if and only if $k$ is an integer multiple of $N_i$.
It follows from
\begin{align}
 N_i \vecomega
 =N_i \vecc -
  \sum_{j \in [1,n] \setminus \{ i \}} \frac{N_i}{p_j} \vecc
  - N_i \vecX_i
 &= \lb \nu + \frac{1}{p_i} \rb N_i \vecc - N_i \vecX_i
 = q_i \vecX_i
\end{align}
that the pure power of $\Xbar_i$ contained in $\Rbar_{N_i}$ is $\Xbar_i^{q_i}$, and
\pref{lm:power} is proved.
\end{proof}

\begin{corollary} \label{cr:puregen}
If $\Xbar_i^m \in \Xibar$ for some $i \in [1,n]$ and $m \in \bN$,
then one has $m=q_i$.
\end{corollary}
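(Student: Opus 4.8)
The plan is to combine the divisibility criterion of \pref{lm:power} with the fact that $\Xi$ is a \emph{basis} of $\frakm/\frakm^2$, not merely a spanning set. A pure power $\Xbar_i^m$ lying in $\Rbar$ already forces $q_i \mid m$, so $m$ is a positive multiple of $q_i$; the minimal choice $m = q_i$ produces the indecomposable element $X_i^{q_i}$, whereas any larger multiple exhibits the corresponding generator as a proper power of $X_i^{q_i}$, hence as a decomposable element of $\frakm^2$. Since a basis vector of $\frakm/\frakm^2$ can never be decomposable, the exponent is forced to be exactly $q_i$.

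First, since $\Xibar \subset \Rbar$, the hypothesis $\Xbar_i^m \in \Xibar$ gives $\Xbar_i^m \in \Rbar$, so \pref{lm:power} yields $q_i \mid m$; write $m = k q_i$ with $k \in \bN$. Suppose toward a contradiction that $k \ge 2$. Both exponents $q_i$ and $(k-1) q_i$ are positive multiples of $q_i$, so \pref{lm:power} gives $\Xbar_i^{q_i}, \Xbar_i^{(k-1) q_i} \in \Rbar$, and the degree identity $q_i \vecX_i = N_i \vecomega$ from the proof of \pref{lm:power} (together with $N_i \ge 1$) shows that their images under $\varphi$ are homogeneous elements of positive degree, hence both lie in $\frakm$. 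Consequently the generator $x = \varphi(\Xbar_i^m) \in \Xi$ factors as
\begin{align}
 x = \varphi\lb \Xbar_i^{q_i} \rb \cdot \varphi\lb \Xbar_i^{(k-1) q_i} \rb \in \frakm \cdot \frakm = \frakm^2,
\end{align}
so the class of $x$ in $\frakm/\frakm^2$ vanishes. This contradicts the assumption that $\Xi$ is a basis of $\frakm/\frakm^2$, none of whose elements is zero. Hence $k = 1$, that is, $m = q_i$.

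I expect no serious obstacle: the whole statement reduces to a one-line contradiction, combining the divisibility $q_i \mid m$ of \pref{lm:power} with the defining property that $\Xi$ is a \emph{basis}, so that no element of $\Xi$ can be decomposable. The only point needing minor care is confirming that the relevant pure powers land in the irrelevant ideal $\frakm$ rather than being spurious; this is immediate from $N_i \ge 1$ and the identity $q_i \vecX_i = N_i \vecomega$ already recorded in the proof of \pref{lm:power}. Even the degenerate case in which one of these images vanishes only reinforces the conclusion, since then $x = 0 \in \frakm^2$ as well.
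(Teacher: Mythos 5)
Your proof is correct and is essentially the paper's own argument: the paper dismisses the corollary as ``immediate from \pref{lm:power},'' and what you wrote---the divisibility $q_i \mid m$ from \pref{lm:power}, plus the observation that any proper multiple $m = kq_i$ with $k \ge 2$ makes the generator a product of two positive-degree elements of $R$, hence an element of $\frakm^2$ whose class in $\frakm/\frakm^2$ vanishes and so cannot belong to a basis---is exactly the intended filling-in of that one-liner. Your care about positivity of degrees via $q_i \vecX_i = N_i \vecomega$ and about the degenerate vanishing case is fine but not strictly needed, since both points are immediate.
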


\begin{proof}
This is immediate from \pref{lm:power}.
\end{proof}

\begin{lemma} \label{lm:MPQ}
For any $k \in \bN$
and any $\Zbar \in \Rbar_k$,
there exist
$
 \Pbar
  \in \bC \ld \Ybar_1,\ldots,\Ybar_n \rd_{\ell(k \vecomega)-1}
$
and
$
 \Qbar \in \bC \ld \xbar_1,\ldots,\xbar_n \rd \cap \Rbar_k
$
such that
$
 \Zbar = \Mbar(k \vecomega) \Pbar \Fbar + \Qbar.
$
\end{lemma}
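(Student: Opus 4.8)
The plan is to lift a polynomial expression for $\varphi(\Zbar)$ in the generators back to $\Rbar$, and then to recognise the resulting error term as a multiple of $\Fbar$ via the monomial normal form \eqref{eq:Mbar}. First I would invoke \pref{lm:embdim}: since $\Xi$ is a basis of $\frakm/\frakm^2$ it generates $R$ over $\bC$, and the induced surjection from the polynomial ring $\bC\ld x_1,\ldots,x_n\rd$ onto $R$ respects the $\bZ\vecomega$-grading, because each $x_i=\varphi(\xbar_i)$ is homogeneous (as $\xbar_i\in\Rbar$). Consequently the image $\varphi(\Zbar)\in R_k$ is a $\bC$-linear combination of monomials $\prod_{i=1}^n x_i^{\alpha_i}$ of degree $k$. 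Substituting $\xbar_i$ for $x_i$ in this combination yields an element $\Qbar\in\bC\ld\xbar_1,\ldots,\xbar_n\rd\cap\Rbar_k$ with $\varphi(\Qbar)=\varphi(\Zbar)$, since each monomial $\prod_i\xbar_i^{\alpha_i}$ again lies in $\Rbar_k$.

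Next I would examine the difference $\Zbar-\Qbar$. By the previous step it lies in $\ker\varphi=\lb\Fbar\rb$, the ideal of $\Tbar$ generated by $\Fbar$, and it is homogeneous of $L$-degree $k\vecomega$. Because $\Fbar$ is homogeneous of degree $\vecc$, the degree-$k\vecomega$ part of $\lb\Fbar\rb$ equals $\Fbar\cdot\Tbar_{k\vecomega-\vecc}$. Applying the normal form \eqref{eq:Mbar} to the factor in $\Tbar_{k\vecomega-\vecc}$, I can write $\Zbar-\Qbar=\Mbar(k\vecomega-\vecc)\,\Pbar\,\Fbar$ for some $\Pbar\in\bC\ld\Ybar_1,\ldots,\Ybar_n\rd_{\ell(k\vecomega-\vecc)}$, with $\Pbar=0$ in the degenerate case $\Tbar_{k\vecomega-\vecc}=0$.

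It then remains to identify $\Mbar(k\vecomega-\vecc)$ with $\Mbar(k\vecomega)$ and $\ell(k\vecomega-\vecc)$ with $\ell(k\vecomega)-1$. This is where the bookkeeping in $L$ enters: from $k\vecomega=\ell(k\vecomega)\,\vecc+\sum_i a_i(k\vecomega)\vecX_i$ one obtains $k\vecomega-\vecc=(\ell(k\vecomega)-1)\vecc+\sum_i a_i(k\vecomega)\vecX_i$, which is already the unique normal form \eqref{eq:ka} of $k\vecomega-\vecc$, since the coefficients $a_i(k\vecomega)$ still lie in $[0,p_i-1]$. Hence $a_i(k\vecomega-\vecc)=a_i(k\vecomega)$ for all $i$ and $\ell(k\vecomega-\vecc)=\ell(k\vecomega)-1$, so that $\Mbar(k\vecomega-\vecc)=\Mbar(k\vecomega)$ and $\Pbar$ has the required degree $\ell(k\vecomega)-1$, completing the argument.

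I do not expect a genuine obstacle here: the statement is, at bottom, the degreewise surjectivity of $\varphi$ supplied by \pref{lm:embdim} together with elementary bookkeeping in the grading group $L$. The one point deserving care is the last paragraph, namely that the monomial part of the quotient is exactly $\Mbar(k\vecomega)$ rather than some other monomial, which I would justify by appealing to the uniqueness of the decomposition \eqref{eq:ka}.
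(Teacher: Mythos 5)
Your proposal is correct and follows essentially the same route as the paper: use \pref{lm:embdim} to write $\Zbar = \Pbar' \Fbar + \Qbar$ with $\Qbar \in \bC\ld \xbar_1,\ldots,\xbar_n\rd \cap \Rbar_k$ and $\Pbar' \in \Tbar_{k\vecomega - \vecc}$, then apply the normal form $\Pbar' = \Mbar(k\vecomega-\vecc)\Pbar$ together with the identities $\Mbar(k\vecomega - \vecc) = \Mbar(k\vecomega)$ and $\ell(k\vecomega - \vecc) = \ell(k\vecomega) - 1$. The only difference is that you spell out the intermediate steps (homogeneity of $\Zbar - \Qbar$, the graded piece of the ideal $\lb \Fbar \rb$, and the uniqueness of the decomposition \eqref{eq:ka}) which the paper leaves implicit.
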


\begin{proof}
Since $\Xi$ generates $R$ as a ring,
there exist $\Pbar' \in \Tbar_{k \vecomega - \vecc}$
and $\Qbar \in \bC \ld \xbar_1, \ldots, \xbar_n \rd$
such that
$
 \Zbar = \Pbar' \Fbar + \Qbar.
$
It follows from the definition of $\ell$ and $\Mbar$ that
$
 \ell(k \vecomega-\vecc) = \ell(k \vecomega)-1
$
and
$
 \Mbar(k \vecomega-\vecc) = \Mbar(k \vecomega).
$
Hence there exists
$
 \Pbar \in \bC \ld \Ybar_1,\ldots,\Ybar_n \rd_{\ell(k \vecomega)-1}
$
such that
$
 \Pbar' = \Mbar(k \vecomega) \Pbar.
$
\end{proof}

\begin{lemma} \label{lm:R=T}
If $\xbar_i=\Xbar_i^{q_i}$ for all $i \in [1,n]$,
then we have $R=T$ and $\xbar_i=\Xbar_i$ for all $i \in [1,n]$.
\end{lemma}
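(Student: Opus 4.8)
The plan is to reduce both conclusions to the single claim that $q_i = 1$ for every $i \in [1,n]$. Indeed, by \pref{cr:puregen} every pure power occurring among the generators $\Xibar$ is forced to equal $\Xbar_i^{q_i}$, and since there are exactly $n$ generators (the hypersurface condition) and at most one for each variable, the hypothesis means $\Xibar = \{\Xbar_i^{q_i}\}_{i=1}^n$ with one generator per variable. Once $q_i = 1$ is known for all $i$, we have $\xbar_i = \Xbar_i$; as $T = \bC[X_1,\ldots,X_n]$ is generated by the $X_i = \varphi(\Xbar_i)$ and $R \subseteq T$ then contains all of them, $R = T$ follows immediately. So the entire content is the claim $q_i = 1$.

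To prove this I would work grade by grade, comparing $R$ with the subalgebra $B = \bC[\Xbar_1^{q_1},\ldots,\Xbar_n^{q_n}] \subseteq \Tbar$. By \pref{lm:embdim} the set $\Xibar$ generates $R$, so $\varphi(B) = R$; since $\varphi$ preserves the $L$-grading this gives $\varphi(B_{k\vecomega}) = R_k = \Tbar_{k\vecomega}/\Fbar\,\Tbar_{k\vecomega-\vecc}$ for every $k$. The useful slices are those with $\ell(k\vecomega) = 1$: there $R_k$ has dimension $n-1$, with the images of the $n$ monomials $\Mbar(k\vecomega)\Ybar_j$ ($j \in [1,n]$) as a spanning set subject to the single relation $\varphi(\Mbar(k\vecomega)\Fbar) = 0$. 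On the other hand $B_{k\vecomega}$ is spanned by those $\Mbar(k\vecomega)\Ybar_j$ all of whose exponents are divisible by the corresponding $q_l$.

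The key step is a dichotomy at such a $k$: if $q_l \mid a_l(k\vecomega)$ fails for even one index $l$, then for each $j \neq l$ the monomial $\Mbar(k\vecomega)\Ybar_j$ still has $\Xbar_l$-exponent $a_l(k\vecomega)$ and hence does not lie in $B$, so $\dim_\bC B_{k\vecomega} \le 1$ and thus $\dim_\bC \varphi(B_{k\vecomega}) \le 1 < n-1 = \dim_\bC R_k$ for $n \ge 4$, contradicting $\varphi(B_{k\vecomega}) = R_k$. Therefore $q_l \mid a_l(k\vecomega)$ must hold for all $l$ and all $k$ with $\ell(k\vecomega) = 1$ (and, by the same argument applied to the one-dimensional slices $\ell(k\vecomega) = 0$, for those $k$ as well).

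Finally, to conclude $q_l = 1$ I would exhibit, for each $l$ with $q_l > 1$, a degree $k$ with $\ell(k\vecomega) = 1$ and $0 < a_l(k\vecomega) < q_l$ --- for instance $a_l(k\vecomega) = 1$, i.e.\ $k \equiv -1 \pmod{p_l}$ --- which violates the divisibility just established. Here one can try to use the periodicity $a_l((k+N)\vecomega) = a_l(k\vecomega)$ together with $\ell((k+N)\vecomega) = \ell(k\vecomega) + N\nu$ to first fix the residue of $k$ modulo $N$ realizing $a_l(k\vecomega) = 1$ and then slide $k$ by multiples of $N$ toward $\ell(k\vecomega) = 1$. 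I expect this number-theoretic existence statement to be the main obstacle: within a fixed residue class modulo $N$ the value $\ell(k\vecomega)$ only advances in steps of $N\nu$, so producing a degree that simultaneously pins $a_l(k\vecomega)$ to a non-multiple of $q_l$ and lands $\ell$ exactly on $1$ requires a careful choice of the remaining residues of $k$, and this is precisely where the Diophantine structure of the signature $\bp$ must be exploited.
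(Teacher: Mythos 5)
Your reduction of the lemma to the single claim $q_i=1$ for all $i$ is correct, and your dichotomy on the slices with $\ell(k\vecomega)\in\{0,1\}$ is sound as far as it goes: writing $B=\bC[\Xbar_1^{q_1},\ldots,\Xbar_n^{q_n}]$, the hypothesis plus \pref{lm:embdim} does give $\varphi(B_{k\vecomega})=R_k$, and if $q_l\nmid a_l(k\vecomega)$ at such a degree then $\dim_\bC B_{k\vecomega}\le 1$ while $\dim_\bC R_k$ equals $1$ (with the unique monomial $\Mbar(k\vecomega)$ forced to lie in $B$) or $n-1\ge 3$, giving the contradiction you want. But the step you yourself flag as ``the main obstacle'' is a genuine gap, not a technicality, and it is where the proof currently fails: you never produce, for an index $l$ with $q_l>1$, a degree $k$ with $\ell(k\vecomega)\in\{0,1\}$ \emph{and} $q_l\nmid a_l(k\vecomega)$. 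Such degrees are scarce: since $\ell(k\vecomega)\ge \nu k-n$, only finitely many $k$ satisfy $\ell(k\vecomega)\le 1$, and nothing controls the residues $a_l(k\vecomega)$ at those finitely many degrees. Your proposed repair --- fix $k$ modulo $N$ so that $a_l(k\vecomega)=1$, then slide by multiples of $N$ --- moves $\ell(k\vecomega)$ in steps of $N\nu$, which at this point in the paper is only known to be a positive integer; if $N\nu\ge 2$ the progression can skip $\{0,1\}$ entirely. You cannot invoke $\nu=1/N$ to fix this: that is \pref{pr:nu_N}, whose proof relies on \pref{lm:R=T}, so the argument would be circular.

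The paper's proof sidesteps exactly this difficulty by never restricting to small $\ell$. It takes $k$ arbitrarily large with $k\equiv -1 \pmod{p_i}$, so that $a_i(k\vecomega)=1$, chooses a monomial $\Xbar_i\Gbar\in\Rbar_k$ with $\Xbar_i\nmid\Gbar$, and writes $\Xbar_i\Gbar=\Mbar(k\vecomega)\Pbar\Fbar+\Qbar$ by \pref{lm:MPQ}. Instead of counting dimensions, it compares the parts of this identity of degree exactly $1$ in $\Xbar_i$: the hypothesis forces $\Qbar\in\bC[\Xbar_1^{q_1},\ldots,\Xbar_n^{q_n}]$, so $q_i\ge 2$ kills the contribution of $\Qbar$, while $p_i\ge 2$ reduces the first term to $\Mbar(k\vecomega)\cdot\Pbar|_{\Xbar_i=0}\cdot\Fbar|_{\Xbar_i=0}$, which is not a monomial because $\Fbar|_{\Xbar_i=0}$ has $n-1\ge 2$ terms --- contradicting that the left-hand side is a monomial. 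This coefficient-extraction trick, valid at any $\ell$, is the idea your approach is missing; to salvage your slice argument you would either have to prove the existence statement (which looks as hard as the lemma itself) or run the comparison at large $\ell$, which in effect reproduces the paper's argument.
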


\begin{proof}
For each $i$, fix a sufficiently large $k$
with $k\equiv -1 \bmod p_i$.
Since
\begin{align}
 k \vecomega = k \vecc - k \sum_{i=1}^n \vecX_i,
\end{align}
we have
$
 a_i(k \vecomega) = 1,
$
so that there exists a monomial $\Gbar \in \Tbar_{k \vecomega - \vecX_i}$
such that $\Xbar_i \Gbar \in \Rbar_k$ and $\Xbar_i \nmid \Gbar$.
By applying \pref{lm:MPQ}, we have
\begin{align} \label{eq:XG2}
 \Xbar_i \Gbar = \Mbar(k\vecomega) \Pbar \Fbar + \Qbar, \quad
 \Xbar_i \mid \Mbar(k\vecomega), \quad \text{and} \quad \Xbar_i^2 \nmid \Mbar(k\vecomega).
\end{align}
Assume for contradiction that $q_i>1$.
By comparing terms of degree $1$ in the variable $X_i$
in \eqref{eq:XG2}, we obtain
\begin{align} \label{eq:XG}
 \Xbar_i \Gbar
  = \Mbar(k\vecomega) \cdot \Pbar \big|_{\Xbar_i=0} \cdot \Fbar \big|_{\Xbar_i=0}.
\end{align}
Since
$
 \Fbar \big|_{\Xbar_i=0}
  = \sum_{j \in [1,n] \setminus \{ i \}} \Xbar_j^{p_j}
$
is not a monomial,
the right hand side of \eqref{eq:XG} is not a monomial.
This contradicts the fact that the left hand side is a monomial,
and \pref{lm:R=T} is proved.
\end{proof}

\begin{lemma} \label{lm:XiXj}
If there exist $i, j \in [1,n]$ such that
$i \ne j$, $\Xbar_i^{q_i} \nin \Xibar$, and
$\Xbar_i^a \Xbar_j^b \nin \Xibar$ for all $a,b \ge 1$,
then $\Xbar_j^{q_j} \in \Xibar$ and $p_i \mid q_i$.
\end{lemma}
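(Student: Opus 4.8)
The plan is to test how the single pure power $X_i^{q_i}$ is generated, by killing every variable other than $\Xbar_i$ and $\Xbar_j$ and reading off both conclusions from the resulting identity in $\bC\ld\Xbar_i,\Xbar_j\rd$. Since $\Xbar_i^{q_i}\in\Rbar_{N_i}$ by \pref{lm:power}, I would first apply \pref{lm:MPQ} to $\Zbar=\Xbar_i^{q_i}$ to write
\[
 \Xbar_i^{q_i}=\Xbar_i^{a_i}\,\Pbar\,\Fbar+\Qbar,
\]
where $a_i=a_i(N_i\vecomega)$, so that $\Mbar(N_i\vecomega)=\Xbar_i^{a_i}$ using $a_\ell(N_i\vecomega)=0$ for $\ell\ne i$ as in the proof of \pref{lm:power}, $\Pbar\in\bC\ld\Ybar_1,\ldots,\Ybar_n\rd$, and $\Qbar\in\bC\ld\xbar_1,\ldots,\xbar_n\rd\cap\Rbar_{N_i}$.

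Next I would apply the $\bC$-algebra homomorphism $\pi\colon\Tbar\to\bC\ld\Xbar_i,\Xbar_j\rd$ that sends $\Xbar_\ell\mapsto0$ for every $\ell\nin\{i,j\}$. Then $\pi(\Fbar)=\Xbar_i^{p_i}+\Xbar_j^{p_j}$ and $\pi(\Pbar)=P(\Xbar_i^{p_i},\Xbar_j^{p_j})$ for a polynomial $P$, while a generator $\xbar_\ell$ survives $\pi$ only if it is a monomial in $\Xbar_i,\Xbar_j$ alone. The decisive bookkeeping step is that the only generator surviving $\pi$ is $\Xbar_j^{q_j}$: by \pref{cr:puregen} a pure power of $\Xbar_i$ lying in $\Xibar$ would have to be $\Xbar_i^{q_i}$, excluded by hypothesis; a pure power of $\Xbar_j$ would have to be $\Xbar_j^{q_j}$; and the mixed monomials $\Xbar_i^a\Xbar_j^b$ with $a,b\ge1$ are excluded by hypothesis. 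As $\Qbar$ is homogeneous of nonzero degree, $\pi(\Qbar)=\rho(\Xbar_j)$ is then a polynomial in $\Xbar_j^{q_j}$ with zero constant term, and $\rho=0$ when $\Xbar_j^{q_j}\nin\Xibar$. Applying $\pi$ to the relation above gives the identity
\[
 \Xbar_i^{q_i}=\Xbar_i^{a_i}\,P(\Xbar_i^{p_i},\Xbar_j^{p_j})\,(\Xbar_i^{p_i}+\Xbar_j^{p_j})+\rho(\Xbar_j)
\]
in $\bC\ld\Xbar_i,\Xbar_j\rd$.

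Finally I would extract both assertions by comparing top powers of $\Xbar_j$. First $P\ne0$, since $P=0$ forces $\Xbar_i^{q_i}=\rho(\Xbar_j)$, impossible as the left side is a nonconstant pure power of $\Xbar_i$. Writing $p_jt_0=\deg_{\Xbar_j}P$, the middle summand has top $\Xbar_j$-degree $p_j(t_0+1)$, attained only through $P\cdot\Xbar_j^{p_j}$; matching the coefficient of $\Xbar_j^{p_j(t_0+1)}$, which on the left vanishes and on the right reads $\Xbar_i^{a_i}g(\Xbar_i)+c'$ with $g\ne0$ the top $\Xbar_j$-coefficient of $P$ and $c'\in\bC$ coming from $\rho$, yields $\Xbar_i^{a_i}g(\Xbar_i)=-c'\in\bC$. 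Since $g\ne0$, a nonzero polynomial equal to a constant must itself be a nonzero constant, which forces $a_i=0$, forces $g$ to be constant, and forces $c'\ne0$. The latter makes $\rho\ne0$, hence $\Xbar_j^{q_j}\in\Xibar$; and $a_i=0$ means $p_i\mid N_i$ by the formula $a_i(N_i\vecomega)=p_i\myceil{N_i/p_i}-N_i$, which in turn gives $p_i\mid q_i$ from $q_i=\nu p_iN_i$.

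The main obstacle is this last comparison: one must argue that the top-$\Xbar_j$-degree coefficient of the middle summand cannot be cancelled by any other term, and that the single scalar equation $\Xbar_i^{a_i}g(\Xbar_i)=\mathrm{const}$ simultaneously pins down $a_i=0$ and the non-vanishing of $\rho$. This is exactly the point where the two hypotheses on $\Xibar$—no pure power $\Xbar_i^{q_i}$ and no mixed $\Xbar_i^a\Xbar_j^b$—enter decisively, since they guarantee that $\rho$ is supported purely in $\Xbar_j$, so that the $\Xbar_i$-dependence on the right must disappear.
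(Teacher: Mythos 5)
Your proof is correct and takes essentially the same route as the paper: \pref{lm:power} and \pref{lm:MPQ} give $\Xbar_i^{q_i}=\Mbar(N_i\vecomega)\Pbar\Fbar+\Qbar$ with $\Mbar(N_i\vecomega)=\Xbar_i^{a_i}$, you project onto $\bC\ld\Xbar_i,\Xbar_j\rd$, and you use the two hypotheses together with \pref{cr:puregen} to see that the only generator that can survive $\pi$ is $\Xbar_j^{q_j}$, so that $\pi(\Qbar)\in\bC\ld\Xbar_j\rd$. The only divergence is the endgame: the paper first deduces $\pi(\Qbar)\neq 0$ (otherwise the right-hand side would be zero or a non-monomial while the left-hand side is the monomial $\Xbar_i^{q_i}$), hence $\Xbar_j^{q_j}\in\Xibar$, and then forces $\Mbar(q_i\vecX_i)=1$ from divisibility of $\Xbar_i^{q_i}-\pi(\Qbar)$ by $\Xbar_i^{a_i}$, whereas your single comparison of coefficients of $\Xbar_j^{p_j(t_0+1)}$ delivers both $a_i=0$ and $c'\neq 0$ (hence $\rho\neq 0$) at once --- an equivalent and equally valid finish, in which you could also shortcut the last inference by noting that $N_i\vecomega=q_i\vecX_i$ gives $a_i(N_i\vecomega)=q_i \bmod p_i$, so $a_i=0$ is literally the statement $p_i\mid q_i$.
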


\begin{proof}
By \pref{lm:power},
we have $\Xbar_i^{q_i} \in R$.
Hence we have
\begin{align} \label{eq:power_x_k}
 \Xbar_i^{q_i} = \Mbar \lb q_i \vecX_i \rb \Pbar \Fbar + \Qbar
\end{align}
by \pref{lm:MPQ}.
Let
$
 \pi \colon \bC \ld \Xbar_1, \ldots, \Xbar_n \rd
  \to \bC \ld \Xbar_i, \Xbar_j \rd
$
be the surjective ring homomorphism
defined by
\begin{align}
 \pi \lb \Xbar_k \rb =
\begin{cases}
 \Xbar_k & k = i, j, \\
 0 & \text{otherwise}.
\end{cases}
\end{align}
By projecting \eqref{eq:power_x_k} by $\pi$,
we obtain
\begin{align} \label{eq:Xq}
 \Xbar_i^{q_i}
 = \pi \lb M \lb q_i \vecX_i \rb \Pbar \rb \cdot \lb \Xbar_i^{p_i}
  + \Xbar_j^{p_j} \rb + \pi \lb \Qbar \rb.
\end{align}
It follows from the assumption of \pref{lm:XiXj}
that the only element in $\Xibar$
whose image by $\pi$ does not vanish
is a polynomial in $\Xbar_j$.
Hence we have $\pi \lb \Qbar \rb \in \bC \ld \Xbar_j \rd$.
If $\pi \lb \Qbar \rb = 0$,
then the right hand side of \eqref{eq:Xq} is not a monomial,
which contradicts the fact that the left hand side is a monomial.
Hence we have $\pi \lb \Qbar \rb \ne 0$,
so that $\Xbar_j^m \in \Xibar$ for some $m \in \bN$.
This implies $m=q_j$ by \pref{cr:puregen}.
It follows from \eqref{eq:Xq} that
$\Xbar_i^{q_i}-\pi \lb \Qbar \rb$ is divisible
by $\pi \lb \Mbar(q_i \vecX_i) \rb$.
Together with the fact that $\pi \lb \Qbar \rb \in \bC \ld \Xbar_j \rd$,
this implies that $\Mbar \lb q_i \vecX_i \rb = 1$.
Hence $p_i$ divides $q_i$, and
\pref{lm:XiXj} is proved.
\end{proof}

\begin{lemma} \label{lm:XiXjXk}
Let $i,j,k$ be distinct elements of $[1,n]$.
If $\Xbar_i^{q_i}, \Xbar_j^{q_j} \nin \Xibar$ and $\Xbar_k^{q_k} \in \Xibar$,
then there exists an element in $\Xibar$
of the form $\Xbar_i^a \Xbar_j^b \Xbar_k^c$
with $(a,b) \ne (0,0)$ and $c \ge 1$.
\end{lemma}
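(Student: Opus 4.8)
The plan is to argue by contradiction, assuming that no element of $\Xibar$ has the form $\Xbar_i^a\Xbar_j^b\Xbar_k^c$ with $(a,b)\ne(0,0)$ and $c\ge1$, and to run the projection argument of \pref{lm:XiXj} one variable wider. Let $\pi\colon\bC\ld\Xbar_1,\ldots,\Xbar_n\rd\to\bC\ld\Xbar_i,\Xbar_j,\Xbar_k\rd$ be the ring homomorphism sending the remaining variables to $0$. An element of $\Xibar$ has nonzero image under $\pi$ precisely when it is a monomial in $\Xbar_i,\Xbar_j,\Xbar_k$; by \pref{cr:puregen} the only pure powers that could occur are $\Xbar_i^{q_i},\Xbar_j^{q_j},\Xbar_k^{q_k}$, of which only $\Xbar_k^{q_k}$ lies in $\Xibar$, while the contradiction hypothesis excludes every bridging monomial. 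Hence the surviving generators are exactly the mixed monomials $\Xbar_i^a\Xbar_j^b$ with $a,b\ge1$ together with the pure power $\Xbar_k^{q_k}$.

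Next I would apply \pref{lm:MPQ} to the power $\Xbar_i^{q_i}$, which lies in $\Rbar$ by \pref{lm:power}, to write $\Xbar_i^{q_i}=\Mbar(q_i\vecX_i)\Pbar\Fbar+\Qbar$ with $\Qbar\in\bC\ld\xbar_1,\ldots,\xbar_n\rd$, and project by $\pi$. Restricting this identity further to $\Xbar_j=0$ kills every mixed generator, so modulo $\Xbar_i^{p_i}+\Xbar_k^{p_k}$ it reads $\Xbar_i^{q_i}=\pi(\Qbar)|_{\Xbar_j=0}$, an element of $\bC\ld\Xbar_k\rd$ in $\bC\ld\Xbar_i,\Xbar_k\rd/\lb\Xbar_i^{p_i}+\Xbar_k^{p_k}\rb$. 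Comparing with the basis $\lc\Xbar_i^a\Xbar_k^b\relmid 0\le a<p_i,\,b\ge0\rc$ of that ring forces $a_i(q_i\vecX_i)=0$, that is $p_i\mid q_i$, exactly as in \pref{lm:XiXj}.

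I regard this divisibility as the crux, since it collapses $\Mbar(q_i\vecX_i)$ to $1$. Setting $s=q_i/p_i\ge1$, one then has $\Xbar_i^{q_i}=\Ybar_i^{s}$, the coefficient $\pi(\Pbar)$ lies in $\bC\ld\Ybar_i,\Ybar_j,\Ybar_k\rd$, and the projected identity becomes a genuine polynomial identity $u^{s}=\pi(\Pbar)\,(u+v+w)+\pi(\Qbar)$ in the algebraically independent variables $u=\Ybar_i$, $v=\Ybar_j$, $w=\Ybar_k$. Since its left-hand side and the first term on the right lie in $\bC\ld\Ybar_i,\Ybar_j,\Ybar_k\rd$, so does $\pi(\Qbar)$; writing it in $u,v,w$, each of its monomials is either a pure power of $w$ (from powers of $\Xbar_k^{q_k}$) or has both $u$- and $v$-exponents at least $1$ (any factor $\Xbar_i^a\Xbar_j^b$ contributes positively to both).

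It then remains to reduce modulo $u+v+w$: substituting $w=-u-v$ yields $u^{s}=\sum_e c_e(-u-v)^e+g(u,v)$ in $\bC\ld u,v\rd$, where the sum is over the pure-$w$ exponents $e$ and $g$ is divisible by $uv$. Putting $u=0$ gives $\sum_e c_e(-v)^e=0$, so all $c_e$ vanish by independence of distinct powers of $v$, and the identity degenerates to $u^{s}=g(u,v)$ with $g\in\lb uv\rb$; this is absurd because $u^{s}$ is not divisible by $v$. This contradiction establishes the lemma. The one place requiring care is the bookkeeping that $\pi(\Qbar)$ lies in $\bC\ld\Ybar_i,\Ybar_j,\Ybar_k\rd$ and has the claimed monomial shape, since the entire contradiction rests on it.
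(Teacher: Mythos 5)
Your proof is correct, and it shares the paper's skeleton up to a point: argue by contradiction, project by $\pi$ onto $\bC \ld \Xbar_i, \Xbar_j, \Xbar_k \rd$, deduce $p_i \mid q_i$ (the paper simply cites \pref{lm:XiXj} for the pair $(i,k)$, which is exactly what your further restriction $\Xbar_j = 0$ re-derives), and record that every monomial of $\pi\lb\Qbar\rb$ is either a pure power of $\Xbar_k$ or divisible by $\Xbar_i \Xbar_j$. But your endgame is genuinely different. The paper expands $\pi\lb\Pbar\rb = \Ybar_i^{q-1} + \Abar_1 \Ybar_i^{q-2} + \cdots + \Abar_{q-1}$ with $\Abar_l \in \bC\ld\Ybar_j,\Ybar_k\rd$, where $q = q_i/p_i$ is your $s$, and runs a coefficient-by-coefficient induction ($\Abar_{q-1}=0$, then $\Xbar_j \mid \Abar_l$ for all $l$), concluding by exhibiting the monomial $\Ybar_i^{q-1}\Ybar_k$, which neither cancels nor has the allowed shape. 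You instead eliminate $\Pbar$ outright by substituting $w=-u-v$ — legitimate because $u=\Ybar_i$, $v=\Ybar_j$, $w=\Ybar_k$ are algebraically independent and every term of the projected identity lies in $\bC\ld u,v,w\rd$, including $\pi\lb\Qbar\rb = u^{s} - \pi\lb\Pbar\rb(u+v+w)$ — and then specialize $u=0$: this kills the part of $\pi\lb\Qbar\rb$ divisible by $uv$, forces all pure-$w$ coefficients to vanish, and leaves $u^{s} \in \lb uv \rb$, which is absurd. Your elimination-plus-specialization argument is shorter and dispenses entirely with the induction and the cancellation bookkeeping; the paper's version stays at the level of explicit monomials. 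The one step that needs (and in your write-up gets) care is transferring the monomial-shape statement from the $\Xbar$-variables to $u,v,w$: this is valid precisely because $\pi\lb\Qbar\rb \in \bC\ld u,v,w\rd$, so a monomial of it with positive $\Xbar_i$- and $\Xbar_j$-exponents automatically has $u$- and $v$-exponents at least one, while the surviving pure $\Xbar_k$-monomials are genuine powers of $w$.
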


\begin{proof}
Assume for contradiction that $\Xbar_i^a \Xbar_j^b \Xbar_k^c \nin \Xibar$
for all $(a,b,c)$ with $(a,b)\ne (0,0)$ and $c \ge 1$.
In particular, we have $\Xbar_i^a \Xbar_k^c \nin \Xibar$ for all $a, c \ge 1$.
Together with the assumption that $\Xbar_i^{q_i} \nin \Xibar$,
this implies $p_i \mid q_i$ by \pref{lm:XiXj}.
If we set $q=q_i/p_i$,
then we have
\begin{align}
 \Ybar_i^{q} = \Pbar \Fbar + \Qbar
\end{align}
by \pref{lm:MPQ},
since $\Ybar_i^q \in T_{q \vecc}$ and $\Mbar(q \vecc) = 1$.
Let
$
 \pi \colon \bC \ld \Xbar_1, \ldots, \Xbar_n \rd
  \to \bC \ld \Xbar_i, \Xbar_j, \Xbar_k \rd
$
be the surjective homomorphism
defined by
\begin{align}
 \pi(\Xbar_l) = 
\begin{cases}
 \Xbar_l & l = i, j, k, \\
 0 & \text{otherwise}.
\end{cases}
\end{align}
We can write
\begin{align}
 \pi \lb \Pbar \rb
 = \Ybar_i^{q-1} + \Abar_1 \Ybar_i^{q-2} + \cdots + \Abar_{q-1},
\end{align}
where
$
 \Abar_l \in \bC \ld \Ybar_j,\Ybar_k \rd
$
for $l \in [1,q-1]$.
Then we have
\begin{align*}
 \pi \lb \Pbar \Fbar \rb
 &= \pi \lb \Pbar \rb \pi \lb \Fbar \rb \\
 &= \lb \Ybar_i^{q-1} + \Abar_1 \Ybar_i^{q-2} + \cdots + \Abar_{q-1} \rb
  \lb \Ybar_i+\Ybar_j+\Ybar_k \rb \\
 &= \Ybar_i^q + \lb \Abar_1+\Ybar_j+\Ybar_k \rb \Ybar_i^{q-1}
  +\lb \Abar_2 + \lb \Ybar_j+\Ybar_k \rb \Abar_1 \rb \Ybar_i^{q-2}
  +\cdots+ \lb \Ybar_j+\Ybar_k \rb \Abar_{q-1}.
\end{align*}
We shall show that $\Xbar_j \mid \Abar_l$ for all $l \in [1, q-1]$.
It follows from the assumption
that every monomial appearing in $\pi(\Qbar)$
is either divisible by $\Xbar_i \Xbar_j$ or
consists only of $\Xbar_k$.
Since all monomials appearing in
$
 \lb \Ybar_j+\Ybar_k \rb \Abar_{q-1}
$
are not divisible by $\Xbar_i$,
they must be in $\bC[\Xbar_k]$.
This implies that $\Abar_{q-1}=0$.
Since all monomials appearing in
$
 \lb \Abar_{q-1} + \lb \Ybar_j + \Ybar_k \rb \Abar_{q-2} \rb \Ybar_i
$
contains $\Xbar_i$,
they must be divisible by $\Xbar_i \Xbar_j$.
Hence we must have $\Xbar_j \mid \Abar_{q-2}$.
By repeating the same argument,
we obtain $\Xbar_j \mid \Abar_l$ for all $l \in [1, q-1]$.
In particular, one has $\Xbar_j \mid \Abar_1$.
It follows that the monomial $\Ybar_i^{q-1} \Ybar_k$
from $(\Abar_1+\Ybar_j+\Ybar_k) \Ybar_i^{q-1}$ do not cancel with any other terms.
Since this monomial is neither divisible by $\Xbar_i \Xbar_j$
nor consists only of $\Xbar_k$,
this is a contradiction,
and \pref{lm:XiXjXk} is proved.
\end{proof}

The following lemma is the key to proving \pref{th:main}.
Set
\begin{align} \label{eq:I1}
 I := \lc i \in [1,n] \relmid \Xbar_i^{q_i} \in \Xibar \rc.
\end{align}

\begin{lemma} \label{lm:gen}
If $n \ge 4$,
then we have $|I| \ge n-1$.
\end{lemma}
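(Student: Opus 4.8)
The plan is to translate the statement into a counting problem about the fixed set $\Xibar$ of $n$ generators. Since $\Xibar$ consists of monomials, I would first split them into two types. By \pref{cr:puregen}, any generator that is a pure power of a single variable must equal $\Xbar_i^{q_i}$ for some $i$, and conversely each $i \in I$ contributes the generator $\Xbar_i^{q_i} \in \Xibar$; these are distinct. Hence exactly $|I|$ of the $n$ generators are pure powers, and writing $J = [1,n] \setminus I$, the remaining $|J| = n - |I|$ generators each involve at least two distinct variables — call these \emph{mixed}. The target $|I| \ge n-1$ is then equivalent to $|J| \le 1$, which I would prove by contradiction, assuming $|J| \ge 2$.

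Next I would read off the combinatorial content of \pref{lm:XiXj}. For distinct $i,j \in J$ one has $\Xbar_i^{q_i} \nin \Xibar$, and since also $\Xbar_j^{q_j} \nin \Xibar$, the conclusion of \pref{lm:XiXj} fails; by modus tollens its hypothesis fails, and as the first conjunct $\Xbar_i^{q_i} \nin \Xibar$ holds, the second must fail, giving $a,b \ge 1$ with $\Xbar_i^a \Xbar_j^b \in \Xibar$. So every pair in $J$ is joined by a mixed generator supported exactly on $\{i,j\}$, and distinct pairs give distinct monomials. This yields $\binom{|J|}{2}$ distinct mixed generators among the $|J|$ available ones, forcing $\binom{|J|}{2} \le |J|$ and hence $|J| \le 3$.

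Finally I would eliminate $|J| = 3$ and $|J| = 2$ using \pref{lm:XiXjXk}, and this is exactly where $n \ge 4$ enters, through $|I| = n - |J| \ge 1$. If $|J| = 3$, the previous bound is tight, so all three mixed generators are the edge generators on $J$, none of which involves a variable indexed by $I$; choosing $\gamma \in I$ and applying \pref{lm:XiXjXk} to two indices of $J$ together with $\gamma$ produces a generator divisible by $\Xbar_\gamma$, a contradiction. If $|J| = 2$, then $|I| \ge 2$, and applying \pref{lm:XiXjXk} to the two indices of $J$ with two distinct $\gamma, \gamma' \in I$ yields two further mixed generators, one divisible by $\Xbar_\gamma$ and one by $\Xbar_{\gamma'}$; with the edge generator on $J$ these are three distinct mixed generators, exceeding the available $|J| = 2$. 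Both cases are contradictory, so $|J| \le 1$.

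The crux, and the step I expect to demand the most care, is the bookkeeping of distinctness: I must verify that the generators furnished by \pref{lm:XiXj} and \pref{lm:XiXjXk} are genuinely distinct monomials and are never accidentally one of the pure powers, so that comparing their count against the fixed number $|J|$ of mixed generators produces a real contradiction. This is precisely where tracking the support of each monomial, together with the constraints $c \ge 1$ and $(a,b) \ne (0,0)$ in \pref{lm:XiXjXk}, does the work.
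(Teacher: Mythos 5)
Your proof is correct and takes essentially the same approach as the paper: the same partition of the $n$ generators into pure powers indexed by $I$ and mixed generators, the same contrapositive reading of \pref{lm:XiXj} to attach a two-variable generator with support a pair in $J = [1,n]\setminus I$, and the same use of \pref{lm:XiXjXk} to force additional mixed generators involving an index of $I$, all counted against $\#\Xibar = n$. The only difference is bookkeeping: the paper merges the three counts into the single inequality $n \ge 2|I| + \binom{r}{2}$ (with $r = |J|$) and maximizes the resulting quadratic to get $n \le 3$, whereas you first extract $|J| \le 3$ from $\binom{|J|}{2} \le |J|$ and then eliminate the cases $|J| = 2, 3$ separately.
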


\begin{proof}
Assume for contradiction
that $n \ge 4$ and $r :=n - |I| \ge 2$.
If $i \ne j$ and $i, j \nin I$,
then we have $\Xbar_i^a \Xbar_j^b \in \Xibar$ for some $a, b \ge 1$
by \pref{lm:XiXj}.
It follows that
\begin{align}
 \# \lc \Xbar_i^a \Xbar_j^b \in \Xibar \relmid i,j \nin I, \ 
 a,b \ge 1 \rc
  \ge \binom{r}{2}.
\end{align}
Similarly, \pref{lm:XiXjXk} implies that
\begin{align} \label{eq:I3}
 \# \lc \Xbar_i^a \Xbar_j^b \Xbar_k^c \in \Xibar \relmid
  i,j \nin I, \ 
  k \in I, \
  (a,b) \ne (0,0), \ 
  c \ge 1 
 \rc
 \ge |I| = n - r.
\end{align}
It follows from \pref{eq:I1}--\pref{eq:I3} that
\begin{align}
 \# \Xibar
 = n
  \ge |I|+\binom{r}{2}+|I|
  = (n-r) + \frac{1}{2}r(r-1)+(n-r),
\end{align}
and hence
\begin{align} \label{eq:nr}
 n \le - \frac{1}{2}r(r-5).
\end{align}
Since
\begin{align}
 \max \lc -\frac{1}{2}r(r-5) \relmid r \in [0,n] \rc
  = 3,
\end{align}
the inequality \eqref{eq:nr} contradicts the assumption $n \ge 4$,
and \pref{lm:gen} is proved.
\end{proof}

\begin{proposition} \label{pr:nu_N}
If $n\geq 4$, we have $\nu=1/N$.
\end{proposition}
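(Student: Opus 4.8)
The plan is to prove the equivalent statement that $c := \nu N = 1$. The starting point is the identity
\[
 N \vecomega
 = N \vecc - \sum_{i=1}^n \frac{N}{p_i} \vecc
 = \left( N - \sum_{i=1}^n \frac{N}{p_i} \right) \vecc
 = c \, \vecc,
\]
which shows at once (as $p_i \mid N$ and $\nu > 0$) that $c$ is a positive integer and that $R_N = T_{N \vecomega} = T_{c \vecc}$. Since $T_{c \vecc}$ is the degree-$c$ piece of $\bC \ld \Ybar_1, \ldots, \Ybar_n \rd / (\sum_i \Ybar_i)$, a polynomial ring in $n-1$ variables, one has $\dim_\bC R_N = \binom{c+n-2}{n-2}$. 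I would also record, for each $i \in I$, that $\deg \Xbar_i^{q_i} = N_i$ (from $N_i \vecomega = q_i \vecX_i$, as in the proof of \pref{lm:power}) and that
\[
 \big( \Xbar_i^{q_i} \big)^{N / N_i} = \Xbar_i^{c p_i} = \Ybar_i^{\, c},
\]
using $q_i (N/N_i) = c \gcd(p_i, N_i) \cdot \big( p_i / \gcd(p_i, N_i) \big) = c p_i$. Thus each pure-power generator $\Xbar_i^{q_i}$ contributes the monomial $\Ybar_i^{\, c}$ to $R_N$.

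By \pref{lm:gen} we have $|I| \in \{ n-1, n \}$, and I would split into these two cases. If $|I| = n$, then $\Xibar = \{ \Xbar_i^{q_i} \}_{i=1}^n$, so the hypothesis of \pref{lm:R=T} is satisfied; it yields $R = T$ together with $\Xbar_i^{q_i} = \Xbar_i$, forcing $q_i = 1$ for every $i$. Then $c \gcd(p_i, N_i) = q_i = 1$ gives $c = 1$ immediately, which is the assertion.

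The case $|I| = n - 1$ is where the substance lies, and I expect it to be the main obstacle. Let $i_0$ be the unique index with $\Xbar_{i_0}^{q_{i_0}} \nin \Xibar$, so that $R$ has the $n-1$ pure-power generators $\{ \Xbar_i^{q_i} \}_{i \in I}$ and exactly one further generator. Because the pure powers involve pairwise distinct variables they are algebraically independent, so the single defining relation of the hypersurface $R$ must involve the last generator; I would use \pref{lm:XiXj} (with $i = i_0$) and \pref{lm:XiXjXk} to pin down the shape of this generator—either a two-variable monomial $\Xbar_{i_0}^a \Xbar_{j_0}^b$, or else $p_{i_0} \mid q_{i_0}$—and identify the relation as the image under $\varphi$ of $\Mbar(k_0 \vecomega) \Fbar$ for a suitable $k_0$, rewritten through $\Ybar_i^{\, c} = (\Xbar_i^{q_i})^{N/N_i}$.

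To conclude, I would compare the span of the degree-$N$ monomials in the $n$ generators with $\dim_\bC R_N = \binom{c+n-2}{n-2}$. Eliminating $\Ybar_{i_0}$, the space $R_N$ has as a basis the degree-$c$ monomials in $\{ \Ybar_i \}_{i \in I}$; a product $\prod_{i \in I} (\Xbar_i^{q_i})^{e_i}$ of pure powers of degree $N$ lies in $T_{c\vecc}$ and hence maps to $\prod_{i \in I} \Ybar_i^{\, c e_i N_i / N}$, so the pure powers alone realize only those basis monomials $\prod_{i \in I} \Ybar_i^{\, t_i}$ for which every $e_i = t_i N / (c N_i)$ is a non-negative integer. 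For $c \ge 2$ this is a proper subfamily—for instance a monomial such as $\Ybar_{i_1} \Ybar_{i_2}^{\, c-1}$ typically fails the integrality condition—so the missing basis elements of $R_N$ would all have to be supplied by the single remaining generator. The hard part will be to show that one generator cannot supply enough such monomials to span $R_N$ when $c \ge 2$; making this count uniform in the $p_i$ (rather than arguing prime-by-prime) and controlling exactly which $\Ybar$-monomials the last generator produces is the crux of the argument, and forces $c = 1$.
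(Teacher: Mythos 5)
Your reduction to showing $c:=\nu N=1$, and your treatment of the case $|I|=n$ (equivalently $R=T$), are correct and agree with the paper's first case: there $q_i=1$ and $q_i=\nu N\gcd(p_i,N_i)=c\gcd(p_i,N_i)$ force $c=1$. The problem is the case $|I|=n-1$, which is the entire substance of the proposition, and there your proposal is not a proof: the final step --- that when $c\ge 2$ the single non-pure-power generator cannot supply enough $\Ybar$-monomials to span $R_N$, together with the claim that the pure powers realize only a ``proper subfamily'' of the basis of $R_N$ --- is exactly what you defer as ``the hard part,'' with no argument given (the word ``typically'' is carrying the load). As it stands this is an outline with the crux missing, not a completed proof; it is also not clear that the count can be made to work uniformly, since products involving the extra generator can contribute many distinct degree-$c$ monomials in the $\Ybar_i$, and nothing in your setup controls which ones.

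For comparison, the paper closes this case with no dimension count at all. Writing $\xbar_i$ for the unique non-pure-power generator, it picks an index $m\ne i$ such that $\Xbar_m$ occurs in $\xbar_i$ and then two further indices $j,k\nin\{i,m\}$ (this is where $n\ge 4$ enters). With this choice no generator has the form $\Xbar_i^a\Xbar_j^b$ with $a,b\ge 1$, so \pref{lm:XiXj} gives $p_i\mid q_i$ unconditionally --- note that your dichotomy ``two-variable monomial or $p_{i_0}\mid q_{i_0}$'' is itself left unresolved in your sketch. Then the projection $\pi$ onto $\bC\ld\Xbar_i,\Xbar_j,\Xbar_k\rd$ kills $\xbar_i$, and the expansion argument of \pref{lm:XiXjXk} applied to $\Ybar_i^q=\Pbar\Fbar+\Qbar$ (with $q=q_i/p_i$) forces $\pi\lb\Qbar\rb=(-\Ybar_j-\Ybar_k)^q$ to lie in $\bC\ld\Xbar_j^{q_j},\Xbar_k^{q_k}\rd$; the cross terms $\Ybar_j^a\Ybar_k^{q-a}$ then give $q_j\mid p_j$ and $q_k\mid p_k$, i.e.\ both $N/(uN_j)$ and $N/(uN_k)$ are integers for $u=\nu N$. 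Since no prime can divide both $N/N_j$ and $N/N_k$, this forces $u=1$. If you want to salvage your approach you would need to actually carry out the monomial count in $R_N$; otherwise, the paper's projection-plus-divisibility argument is the step your proposal is missing.
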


\begin{proof}
If $R=T$, we have $q_i=1$ for all $i \in [1,n]$ by \pref{lm:power}.
Hence
\begin{align}
 q_i
  =\nu p_i N_i
  =\nu \cdot \lcm(p_i,N_i) \cdot \gcd(p_i,N_i)
  =\nu N \cdot \gcd(p_i, N_i)
  =1.
\end{align}
Since $\nu N$ is an integer,
it follows that
\begin{align} \label{eq:nuN}
 \nu N
  = \gcd (p_i, N_i)
  =1.
\end{align}

If $R \subsetneq T$,
then we have $\Xbar_i^{q_i} \nin \Xibar$
for some $i \in [1,n]$ by \pref{lm:R=T}.
Since we have $\Xbar_l^{q_l}\in \Xibar$ for all $l \in [1,n] \setminus \{ i \}$
by \pref{lm:gen},
we can set $\xbar_l = \Xbar_l^{q_l}$ for $l \in [1,n] \setminus \{ i \}$.
Then we have $\xbar_i \ne \Xbar_i^{q_i}$.
Let $m$ be an element of $[1,n] \setminus \{ i \}$
such that the variable $\Xbar_m$ appears in $\xbar_i$,
and fix any distinct elements $j$ and $k$ of $[1,n] \setminus \{ i, m \}$.
Then it follows that $\Xbar_i^a \Xbar_j^b\nin \Xibar$ for all $a,b \ge 1$.
This implies that $p_i \mid q_i$ by \pref{lm:XiXj}.

Let
$
 \pi \colon \bC \ld \Xbar_1, \ldots, \Xbar_n \rd
  \to \bC \ld \Xbar_i, \Xbar_j, \Xbar_k \rd
$
be the surjective homomorphism
defined by
\begin{align}
 \pi \lb \Xbar_l \rb = 
\begin{cases}
 \Xbar_l & l = i, j, k, \\
 0 & \text{otherwise},
\end{cases}
\end{align}
just as in the proof of \pref{lm:XiXjXk}.
It follows from the choice of $j$ and $k$ that
\begin{align} \label{eq:pixi}
 \pi \lb \xbar_l \rb = 0 \text{ for any } l \in [1,n] \setminus \{ j, k \}.
\end{align}
If we write $q=q_i/p_i$,
then the same argument as in the proof of \pref{lm:XiXjXk}
shows
\begin{align} \label{eq:piQ}
 \Ybar_i^q = \Pbar \Fbar + \Qbar
\end{align}
and
\begin{align} \label{eq:PF}
 \pi(\Pbar \Fbar)
 &= \Ybar_i^q + \lb \Abar_1 + \Ybar_j + \Ybar_k \rb \Ybar_i^{q-1}
  + \lb \Abar_2 + \lb \Ybar_j + \Ybar_k \rb \Abar_1 \rb \Ybar_i^{q-2}
  + \cdots + \lb \Ybar_j + \Ybar_k \rb \Abar_{q-1}.
\end{align}
If follows from \eqref{eq:pixi} that
\begin{align} \label{eq:piQ2}
 \pi \lb \Qbar \rb \in \bC \ld \Xbar_j^{q_j},\Xbar_k^{q_k} \rd.
\end{align}
By projecting \eqref{eq:piQ} by $\pi$,
we obtain
\begin{align}
 \pi \lb \Qbar \rb=\Ybar_i^q-\pi \lb \Pbar \Fbar \rb,
\end{align}
which together with \eqref{eq:PF} and \eqref{eq:piQ2} gives
\begin{align}
 \Abar_l=(-\Ybar_j-\Ybar_k)^l \quad \text{for any } l \in [1,q-1]
\end{align}
and
\begin{align} \label{eq:piQ3}
 \pi(Q)=(-\Ybar_j-\Ybar_k)^q.
\end{align}
It follows from \eqref{eq:piQ2} and \eqref{eq:piQ3}
that
$
 \Ybar_j^a \Ybar_k^{q-a} \in \bC \ld \Xbar_j^{q_j},\Xbar_k^{q_k} \rd
$
for any $a \in [0,q]$.
Since
$
 \Ybar_j^a \Ybar_k^{q-a}
  = \Xbar_j^{a p_j} \Xbar_k^{(q-a) p_k},
$
this implies $q_j \mid p_j$ and $q_k \mid p_k$.
Hence both $p_j/q_j=1/\nu N_j$ and
$p_k/q_k=1/\nu N_k$ are integers.
The product $u := \nu N$ is an integer
by the definitions of $\nu$ and $N$ in \eqref{eq:df}.
The definitions of $N_j$, $N_k$ and $N$
in \eqref{eq:df} and
the fact that both $1/\nu N_j=N/u N_j$ and
$1/\nu N_k=N/u N_k$ are integers
imply $u=1$,
and \pref{pr:nu_N} is proved.
\end{proof}

\begin{corollary} \label{cr:piqi}
We have $q_i=\gcd(p_i,N_i)$ and, in particular, $q_i \mid p_i$ for any $i \in [1,n]$.
\end{corollary}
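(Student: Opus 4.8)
The plan is to deduce the corollary directly from \pref{pr:nu_N}, combined with the elementary identity relating least common multiples and greatest common divisors. First I would recall that for any two positive integers $a$ and $b$ one has $\lcm(a,b) \cdot \gcd(a,b) = ab$. Applying this with $a = p_i$ and $b = N_i$ gives the factorization $p_i N_i = \lcm(p_i, N_i) \cdot \gcd(p_i, N_i)$, which is precisely the form in which $q_i$ can be rewritten.

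Next I would observe that $\lcm(p_i, N_i) = N$. Indeed, unwinding the definitions in \eqref{eq:df}, we have $N_i = \lcm\{p_j \mid j \in [1,n] \setminus \{i\}\}$, so taking the least common multiple with the omitted term $p_i$ recovers $\lcm\{p_1, \ldots, p_n\} = N$. Then, invoking $\nu = 1/N$ from \pref{pr:nu_N} (which applies since we work throughout under $n \ge 4$), I would compute
\begin{align}
 q_i
  = \nu p_i N_i
  = \frac{1}{N} \cdot \lcm(p_i, N_i) \cdot \gcd(p_i, N_i)
  = \frac{1}{N} \cdot N \cdot \gcd(p_i, N_i)
  = \gcd(p_i, N_i).
\end{align}
The asserted divisibility $q_i \mid p_i$ then follows at once, since $\gcd(p_i, N_i)$ is by definition a divisor of $p_i$.

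I do not expect any substantive obstacle here: the entire arithmetic content, namely the identification $\nu = 1/N$, was already secured in \pref{pr:nu_N}, and the corollary is a one-line computation chaining that equality with the standard $\lcm$--$\gcd$ product formula. The only point that merits a moment of care is the intermediate claim $\lcm(p_i, N_i) = N$; but this is immediate from the definitions of $N$ and $N_i$, and the same chain of equalities already appears implicitly in the $R = T$ case of the proof of \pref{pr:nu_N}. The present corollary simply isolates that computation and runs it with the now-established value of $\nu$ rather than with the hypothesis $\nu N = 1$ specific to that case.
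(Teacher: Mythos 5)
Your proposal is correct and is essentially identical to the paper's own proof: both rewrite $q_i = \nu p_i N_i$ via the product formula $p_i N_i = \lcm(p_i,N_i)\cdot\gcd(p_i,N_i)$, note that $\lcm(p_i,N_i)=N$, and apply $\nu = 1/N$ from \pref{pr:nu_N} to conclude $q_i = \gcd(p_i,N_i)$. The only difference is cosmetic: you spell out the intermediate claim $\lcm(p_i,N_i)=N$, which the paper uses implicitly in the same chain of equalities.
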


\begin{proof}
It follows from \eqref{eq:df} and $\nu = 1/N$ that
\begin{align}
 q_i
  =\nu p_i N_i
  =\nu \cdot \lcm(p_i,N_i) \cdot \gcd(p_i,N_i)
  =\nu N \cdot \gcd(p_i, N_i)
  =\gcd(p_i, N_i).
\end{align}
\end{proof}

\begin{corollary} \label{cr:gen}
We have either
\begin{enumerate}[(1)]
 \item \label{it:1}
$R=T$ and $x_i = X_i$ for all $i \in [1,n]$, or
 \item \label{it:2}
$R \ne T$ and
there exists $i \in [1,n]$ such that $q_i=p_i$ and
$
\displaystyle{
 x_j =
\begin{cases}
 X_j^{q_j} & j \ne i, \\
 \prod_{q_k \ne 1} X_k & j = i.
\end{cases}
}
$
\end{enumerate}
\end{corollary}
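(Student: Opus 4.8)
The plan is to read off the two alternatives from \pref{lm:gen} together with the analysis already carried out in the proof of \pref{pr:nu_N}. By \pref{lm:gen} we have $|I| \ge n-1$, so at most one index fails to contribute its pure power to $\Xibar$, and the dichotomy in the statement corresponds exactly to $|I| = n$ versus $|I| = n-1$.

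First I would treat the case $|I| = n$. Then every $\Xbar_i^{q_i}$ lies in $\Xibar$, and since these are $n$ distinct monomials while $\# \Xibar = n$, we get $\Xibar = \lc \Xbar_i^{q_i} \rc_{i=1}^n$; after relabelling, $\xbar_i = \Xbar_i^{q_i}$ for all $i$. Now \pref{lm:R=T} gives $R = T$ and $\xbar_i = \Xbar_i$, hence $q_i = 1$ and $x_i = X_i$ for every $i$, which is the first alternative. For the converse I would note that if $R = T$ then each $X_i$ lies in $R$, so $q_i = 1$ by \pref{lm:power}; since the only monomials of $T$ outside $\frakm^2$ are the variables, any monomial basis of $\frakm/\frakm^2$ equals $\lc \Xbar_i \rc$ and $|I| = n$. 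Thus $|I| = n \iff R = T$, and the case $|I| = n-1$ is precisely $R \ne T$.

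In the case $|I| = n-1$, let $i$ be the unique index with $\Xbar_i^{q_i} \nin \Xibar$. Exactly as in the proof of \pref{pr:nu_N}, \pref{lm:gen} lets me set $\xbar_l = \Xbar_l^{q_l}$ for all $l \ne i$, giving $x_l = X_l^{q_l}$; and that proof, combined with \pref{cr:piqi}, yields $p_i \mid q_i$ and $q_i \mid p_i$, so $q_i = p_i$. It then remains only to identify the single non-pure-power generator $x_i$ as $\prod_{q_k \ne 1} X_k$.

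The hard part is this last identification. I would first show that $\mu := \prod_{q_k \ne 1} \Xbar_k$ lies in $\Rbar$: since $\nu = 1/N$ by \pref{pr:nu_N}, the relations $p_k \vecX_k = \vecc$ give $\vecX_k = (N/p_k)\, \vecomega$ rationally, and a direct computation in $L$ shows $\deg \mu$ is the integer multiple $\lb \sum_{q_k \ne 1} N/p_k \rb \vecomega$ of $\vecomega$ (in particular at least two indices must have $q_k \ne 1$, since otherwise $\mu = X_i \nin R$ by \pref{lm:power}). The subtle point is that $\varphi(\mu)$ is indecomposable, i.e.\ not contained in $\frakm^2$: although $\mu$ is a product of several variables, each variable $\Xbar_k$ with $q_k \ne 1$ satisfies $X_k \nin R$ by \pref{lm:power}, so $\mu$ need not factor inside $R$. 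To rule out a hidden factorisation one must use the relation $\Fbar$, and this is exactly the monomial bookkeeping—projecting to three variables $\Xbar_i, \Xbar_j, \Xbar_k$ and tracking the terms $\Ybar_i^q$ and $\Abar_l$—performed in the proofs of \pref{lm:XiXjXk} and \pref{pr:nu_N}. Once $\varphi(\mu)$ is known to be indecomposable and to involve $\Xbar_i$, which none of the remaining generators $X_l^{q_l}$ does, its image is independent of theirs modulo $\frakm^2$, so $\lc X_l^{q_l} \rc_{l \ne i} \cup \lc \varphi(\mu) \rc$ is a basis of $\frakm/\frakm^2$ and we may take $\xbar_i = \mu$, giving the second alternative.
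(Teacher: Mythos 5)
Your reduction to the dichotomy $|I|=n$ versus $|I|=n-1$ is fine, and your derivation of $q_i=p_i$ (via $p_i \mid q_i$ from \pref{lm:XiXj}, exactly as in the proof of \pref{pr:nu_N}, combined with $q_i \mid p_i$ from \pref{cr:piqi}) is correct; so is the degree computation showing that $\mu = \prod_{q_k \ne 1} \Xbar_k$ lies in $\Rbar$. The gap is in the step you yourself flag as the hard part: you never prove that $\varphi(\mu) \nin \frakm^2$, you only assert that it follows from ``the monomial bookkeeping performed in the proofs of \pref{lm:XiXjXk} and \pref{pr:nu_N}.'' Those proofs establish different statements (constraints on which monomials lie in $\Xibar$, and $\nu = 1/N$), and their technique --- projecting onto three variables and comparing with $\pi(\Qbar)$ --- does not by itself exclude a factorization of $\mu$ into two monomials of $\Rbar$ supported on complementary subsets of $\lc k \relmid q_k \ne 1 \rc$, which is precisely what $\varphi(\mu) \in \frakm^2$ would mean here. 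Your guiding remark that ``one must use the relation $\Fbar$'' points in the wrong direction.

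The ingredient that closes the gap, and is the paper's actual argument, is the observation that the degree of $\mu$ equals $\lb \sum_{q_k \ne 1} N/p_k \rb \vecomega$, which is strictly smaller than $\vecc = N \vecomega = \deg \Fbar$; hence the ideal $\lb \Fbar \rb$ has no nonzero elements in this degree and the relation is invisible there. Consequently, writing $\varphi(\mu)$ as a polynomial in the generators lifts to a genuine polynomial identity in $\Tbar$, so the monomial $\mu$ must equal a product $\prod_j \xbar_j^{e_j}$. Since $\mu$ is squarefree and is divisible by no $\xbar_j = \Xbar_j^{q_j}$ with $j \ne i$ (if $q_j \ge 2$ the exponent is too large, and if $q_j = 1$ the variable $\Xbar_j$ does not occur in $\mu$), this forces $\mu = \xbar_i$. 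This single computation proves simultaneously that $\varphi(\mu)$ is indecomposable and that it is the missing generator, so no separate ``indecomposability first, then basis exchange'' argument is needed --- and without the degree bound $\deg \mu < N\vecomega$ your outline cannot be completed as stated.
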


\begin{proof}
It follows from \eqref{eq:df} and \pref{pr:nu_N} that
\begin{align}
 \frac{1}{N} = 1 - \sum_{i=1}^n \frac{1}{p_i}.
\end{align}
Hence we have
\begin{align}
 (N-1) \vecomega
  &= (N-1) \vecc - (N-1) \sum_{i=1}^n \vecX_i \\
  &= \lc (N-1)-\sum_{i=1}^n \frac{N}{p_i} \rc \vecc + \sum_{i=1}^n \vecX_i \\
  &= \sum_{i=1}^n \vecX_i,
\end{align}
so that
$
 \prod_{i=1}^n X_i \in R_{N-1}.
$
If $R=T$,
then we can set $x_i = X_i$ for all $i \in [1,n]$.
If $R \ne T$,
then we have $|I|=n-1$ by \pref{lm:gen},
and there exists $i \in [1,n]$
such that $x_j = X_j^{q_j}$ for $j \in [1,n] \setminus \{ i \}$.
Note that we have $\Fbar\in \Tbar_\vecc$ and $\vecc=N \vecomega$.
We  can remove $X_k$ such that $q_k=1$ 
from $\prod_{k=1}^n X_k$
to obtain an element
$
 \prod_{q_k \ne 1} X_k,
$
which is one of the generators of $R$.
Hence $x_i=\prod_{q_k \ne 1} X_k$.
Since we have $X_i^{q_i}\in R$ (\pref{lm:power}) and $q_i \mid p_i$ (\pref{cr:piqi}), it follows that $q_i=p_i$.
\end{proof}

\begin{lemma} \label{lm:c}
For any positive integer $n$,
there exist only finitely many sequences
$(p_1, \ldots, p_n)$
of $n$ positive integers
satisfying
\begin{align}
 \sum_{i=1}^n \frac{1}{p_i}+\frac{1}{N}=1,
\end{align}
where
$
 N = \lcm \{ p_i \mid i \in [1,n] \}.
$
\end{lemma}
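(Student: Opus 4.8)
The plan is to forget the constraint that $N$ be the least common multiple and to read the identity in the statement simply as an expression of $1$ as a sum of $n+1$ unit fractions
\[
 \frac{1}{p_1} + \cdots + \frac{1}{p_n} + \frac{1}{N} = 1,
\]
in which $N$ is now allowed to be an \emph{arbitrary} positive integer. The assignment $(p_1,\ldots,p_n) \mapsto (p_1,\ldots,p_n,N)$ is injective, so any upper bound for the number of solutions $(x_1,\ldots,x_{n+1})$ of $\sum_{j=1}^{n+1} 1/x_j = 1$ in positive integers is a fortiori an upper bound for the number of sequences we care about. This reduces the lemma to the classical finiteness of representations of $1$ as a sum of a fixed number of unit fractions.

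I would then prove that classical fact by the standard denominator-bounding induction. After reordering so that $x_1 \le x_2 \le \cdots \le x_{n+1}$, the inequality $1 = \sum_j 1/x_j \le (n+1)/x_1$ forces $x_1 \le n+1$, leaving finitely many choices for $x_1$. Inductively, once $x_1,\ldots,x_{k-1}$ are fixed, set $s_k = 1 - \sum_{i<k} 1/x_i$; a completion exists only when $s_k > 0$, and in that case $s_k = \sum_{i \ge k} 1/x_i \le (n+2-k)/x_k$, so that $x_k \le (n+2-k)/s_k$. Carrying the induction through $k=n+1$ then exhibits finitely many solutions.

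An alternative that avoids invoking any external statement is to run the same two-step bound directly on the identity in the lemma, using $N = \lcm\{p_i\} \ge p_n$ and hence $1/N \le 1/p_n \le 1/p_k$ for the ordering $p_1 \le \cdots \le p_n$: one gets $p_1 \le n+1$, and inductively $p_k \le (n+2-k)/\bigl(1 - \sum_{i<k} 1/p_i\bigr)$ whenever the denominator is positive, since the $n-k+2$ quantities $1/p_k,\ldots,1/p_n,1/N$ are each at most $1/p_k$. Either route yields the finiteness of the solution set.

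The argument is essentially routine; the one point that needs a moment's care is the induction step, where one must note that fixing $x_1,\ldots,x_{k-1}$ (respectively $p_1,\ldots,p_{k-1}$) leaves only finitely many tuples with $s_k>0$, and that over this finite family the positive values of $s_k$ are bounded away from $0$, so the resulting bound on the next denominator is genuinely finite. The conceptual step that makes everything immediate is the reduction itself: dropping the $\lcm$ requirement and relaxing $N$ to a free variable turns the problem into the purely combinatorial one about unit fractions.
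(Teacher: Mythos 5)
Your proposal is correct and takes essentially the same approach as the paper: the paper's own proof is precisely your ``alternative'' route --- order $p_1 \le \cdots \le p_n$, use $p_i \le N$ (hence $1/N \le 1/p_k$) to get $p_1 \le n+1$, then inductively bound each $p_k$ by $(n+2-k)\big/\bigl(1-\sum_{i<k}1/p_i\bigr)$. Your primary packaging --- dropping the $\lcm$ constraint via the injection $(p_1,\ldots,p_n)\mapsto(p_1,\ldots,p_n,N)$ and invoking finiteness of representations of $1$ as a sum of $n+1$ unit fractions --- is a clean but cosmetic reduction, since you then prove that classical fact by the very same denominator-bounding induction.
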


\begin{proof}
We may assume $p_1\le p_2 \le \cdots \le p_n$.
Then we have $p_1 \le N$ and
\begin{align}
 \frac{n+1}{p_1} \ge \sum_{i=1}^n \frac{1}{p_i}+\frac{1}{N}=1,
\end{align}
so that
\begin{align}
 p_1 \le n+1.
\end{align}
Hence there are only finitely many possibilities for $p_1$.
If we fix $p_1$,
then we have
\begin{align}
 \frac{n}{p_2} \ge \sum_{i=2}^n \frac{1}{p_i}+\frac{1}{N}=1 - \frac{1}{p_1}
\end{align} 
since $p_2 \le N$.
This leaves only finitely many possibilities for $p_2$.
By repeating the same argument,
we can see that there are only finitely many possibilities
for $(p_1, \ldots, p_n)$.
\end{proof}

\begin{proof}[Proof of \pref{th:main}]
For any $n \ge 4$,
\pref{pr:nu_N} and \pref{lm:c}
give a finite list of possible signatures
for hypersurface generalized triangle singularities, and
\pref{th:main} is proved.
\end{proof}

For each signature $\bp=(p_1, \ldots, p_n)$
with $\nu=1/N$,
we can check if $R$ is a hypersurface as follows:
If $q_i=1$ for any $i \in [1,n]$,
then we have $R = T$ and $R$ is a hypersurface.
\begin{comment}
\begin{shaded}
Even if $q_i \ne 1$ for some $i \in [1,n]$,
\pref{cr:piqi} shows that
we need $q_i | p_i$ for any $i \in [1,n]$
in order for $R$ to be a hypersurface.
[$\leftarrow$ $q_i\mid p_i$ follows from $\nu=1/N$]
\end{shaded}
\end{comment}

If $R$ is a hypersurface and $R \ne T$,
then there exists $i \in [1, n]$ such that $q_i=p_i$ and
\begin{align} \label{eq:xj}
 x_j =
\begin{cases}
 X_j^{q_j} & j \ne i, \\
 \prod_{q_k \ne 1} X_k & j = i
\end{cases}
\end{align}
by \pref{cr:gen}.
\begin{comment}
\begin{shaded}
For this to be the case,
we need $q_i=p_i$,
so that $X_i^{q_i}$ can be written in terms of
$X_j^{q_j}$ for $j \in [1,n] \setminus \{ i \}$.
\end{shaded}
\end{comment}
Assume that there exists $i \in [1,n]$ such that $q_i=p_i$.
Fix any such $i$ and define $\{ x_j \}_{j=1}^n$ by \eqref{eq:xj}.
Let $R'$ be the subring of $T$
generated by $\{ x_j \}_{j=1}^n$,
so that $R$ is a hypersurface
if and only if $R = R'$.
It follows from $\nu=1/N$ that $q_j \mid p_j$ for any $j \in [1,n]$ (cf.\ \pref{cr:piqi}).
Hence $Y_j := X_j^{p_j}$ is contained in $R'$ for any $j \in [1,n]$.
Note that any element of $T_{\vecv}$ for $\vecv \in L$
can be written as the product $M(\vecv) P$,
where $M(\vecv):=\varphi(\Mbar(\vecv))$ is the image of $\Mbar(\vecv)$ defined by \eqref{eq:Mbar},
and $P$ is a homogeneous element of $\bC[Y_1, \ldots, Y_n]/(Y_1+\cdots+Y_n)$.
Since $M((k+N) \vecomega) = M(k \vecomega)$ for any $k \in \bZ$,
the ring $R$ is generated by $\{ Y_j \}_{j=1}^{n}$ and $\{ M(k\vecomega) \}_{k=0}^{N-1}$.
Therefore we have $R = R'$
if and only if $M(k \vecomega) \in R'$ for $0 \leq k \leq N-1$.
\pref{tb:dim3} is obtained in this way.

\section{Proof of \pref{th:isolated}}
 \label{sc:isolated}

We keep the same notations as in \pref{sc:main}.
Given a signature $\bp = (p_1, \ldots, p_n)$,
we define a group $G \subset \GL_n(\bC)$ by
\begin{equation}
 G = \lc \diag(\alpha_1, \ldots, \alpha_n) \relmid
  \alpha_1^{p_1}=\cdots=\alpha_n^{p_n} = \prod_{i=1}^{n} \alpha_i=1 \rc.
\end{equation}
The group $G$ acts naturally on $T$ in such a way that
$\diag(\alpha_1, \ldots, \alpha_n) \in G$ maps
$X_i \in T$ to $\alpha_i X_i$ for $i \in [1,n]$.

\begin{lemma} \label{lm:invariant_ring}
If $n\geq 4$ and $R$ is a hypersurface,
then $R$ coincides with the invariant ring $T^G$.
\end{lemma}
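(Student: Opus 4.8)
The plan is to show the two inclusions $R \subseteq T^G$ and $T^G \subseteq R$ separately, using the explicit description of the generators of $R$ provided by \pref{cr:gen} together with the hypothesis $\nu = 1/N$ from \pref{pr:nu_N}. First I would observe that $G$ acts on $T$ by rescaling the variables $X_i \mapsto \alpha_i X_i$, so a monomial $\prod_{i=1}^n X_i^{b_i}$ is $G$-invariant if and only if $\prod_{i=1}^n \alpha_i^{b_i} = 1$ for every tuple $(\alpha_1, \ldots, \alpha_n)$ satisfying the defining relations of $G$, namely $\alpha_i^{p_i} = 1$ for all $i$ and $\prod_i \alpha_i = 1$. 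The key point is to translate this invariance condition into an arithmetic condition on the exponent vector $(b_1, \ldots, b_n)$, and then to compare it with the combinatorial description of which monomials land in the graded pieces $R_k = T_{k\vecomega}$.

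\textbf{The inclusion $R \subseteq T^G$.} For this direction I would check that each generator of $R$ is $G$-invariant, and since $R$ is generated by homogeneous monomials this suffices. Concretely, the graded pieces of $R$ are indexed by $L$-degrees of the form $k\vecomega$, and by the structure of $L$ an element of $T_{k\vecomega}$ is a product $M(k\vecomega) P$ with $P \in \bC[Y_1, \ldots, Y_n]/(Y_1 + \cdots + Y_n)$. The pure powers $Y_i = X_i^{p_i}$ are manifestly $G$-invariant because $\alpha_i^{p_i} = 1$. It then remains to verify that the monomial $M(k\vecomega)$ is $G$-invariant for each $k$; this follows from the fact that its $L$-degree is a multiple of $\vecomega = \vecc - \sum_i \vecX_i$, which pins down its exponent vector modulo the $p_i$ and forces the invariance condition $\prod_i \alpha_i^{b_i} = 1$ via the relation $\prod_i \alpha_i = 1$. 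Hence every homogeneous element of $R$ is fixed by $G$.

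\textbf{The inclusion $T^G \subseteq R$.} This is the harder direction and where I expect the main obstacle to lie. The idea is that a $G$-invariant monomial $\prod_i X_i^{b_i}$ must have exponent vector satisfying a congruence system forcing its $L$-degree to be a multiple of $\vecomega$; that is, $G$-invariance should be exactly equivalent to lying in some $T_{k\vecomega}$. To make this precise I would analyze the character group: $G$ is the kernel of a map, so its invariants correspond to the sublattice of exponent vectors orthogonal to the relevant characters, and I would show this sublattice is generated by the degree-$\vecomega$ monomials together with the $Y_i$. The crucial input is $\nu = 1/N$, which guarantees that $N\vecomega = \vecc$ and makes the grading group $L$ compatible with the order of $G$; the main obstacle is verifying that there are no "extra" invariants—monomials fixed by $G$ whose $L$-degree is not a multiple of $\vecomega$—and ruling these out will require a careful count comparing $|G|$ with the index of $\bZ\vecomega$ in $L$. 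Once the invariant monomials are shown to coincide with the monomials of degree in $\bZ\vecomega$, the equality $T^G = R$ follows, since both rings are spanned by exactly these monomials.
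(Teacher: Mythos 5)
Your overall strategy is the same as the paper's: both reduce the lemma to identifying the character group of $G$ with $L/\bZ\vecomega$, and your first inclusion $R \subseteq T^G$ is correct as sketched---a monomial of degree $k\vecomega$ has exponents $a_i \equiv -k \pmod{p_i}$, hence is fixed by $\diag(\alpha_1,\ldots,\alpha_n) \in G$ because $\prod_i \alpha_i^{a_i} = \big(\prod_i \alpha_i\big)^{-k} = 1$. Note that this direction uses neither the hypersurface hypothesis nor \pref{pr:nu_N}.

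The genuine gap is in the inclusion $T^G \subseteq R$, which your proposal does not prove: it ends by declaring that ruling out ``extra'' invariants is the main obstacle and that it ``will require a careful count comparing $|G|$ with the index of $\bZ\vecomega$ in $L$,'' but no such count is carried out, and that deferred step is precisely the content of the lemma. Moreover, the step you defer is not an obstacle at all; it follows at once from duality for finite abelian groups. Writing $e_1,\ldots,e_n$ for the standard basis of the exponent lattice $\bZ^n$, the group $G$ is by definition the common kernel of the characters in $H := \langle p_1e_1,\ldots,p_ne_n,\ e_1+\cdots+e_n\rangle \subset \bZ^n$; since $\bZ^n/H$ is finite, the pairing between $\bZ^n/H$ and $G$ is perfect, so a monomial is $G$-invariant if and only if its exponent vector lies in $H$ \emph{exactly} (not in some larger saturation). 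Each generator of $H$ has $L$-degree in $\bZ\vecomega$: the degree of $p_ie_i$ is $p_i\vecX_i = \vecc$, which equals $N\vecomega$ by \pref{pr:nu_N}, and the degree of $e_1+\cdots+e_n$ is $\sum_i \vecX_i = \vecc-\vecomega = (N-1)\vecomega$. Hence every $G$-invariant monomial has degree in $\bZ\vecomega$ and so lies in $R$; no comparison of $|G|$ with $[L:\bZ\vecomega]$ is needed. This is also exactly where the hypothesis enters: without $\vecc = N\vecomega$ the subgroup $\bZ\vecc + \bZ\vecomega$ is strictly larger than $\bZ\vecomega$ and the inclusion fails. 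The paper packages the same computation as a single isomorphism: $\vecc = N\vecomega$ gives $\bZ\vecc \subset \bZ\vecomega$, so $L/\bZ\vecomega$ is the quotient of $L/\bZ\vecc \cong \bigoplus_i \bZ\vecX_i/(p_i\vecX_i)$ by the class of $\vecomega \equiv -\sum_i \vecX_i$, which is precisely the character group of $G$, whence $T^G = R$ immediately.
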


\begin{proof}
We have
$
 N \vecomega = N \nu \vecc,
$
which is equal to $\vecc$ by \pref{pr:nu_N}.
This shows that
$
 \bZ \vecomega \supset \bZ \vecc.
$
Note that
\begin{align}
 L / \bZ \vecc \cong \bigoplus_{i=1}^n \bZ \vecX_i / ( p_i \vecX_i )
\end{align}
and
\begin{align}
 \vecomega \equiv - \sum_{i=1}^n \vecX_i \mod \vecc.
\end{align}
It follows that
\begin{align}
 L / \bZ \vecomega
  &\cong \left. \lb \bigoplus_{i=1}^n \bZ \vecX_i \rb \right/ \lb p_i \vecX_i, \sum_{i=1}^n \vecX_i \rb.
\end{align}
This allows us to identify $L/\bZ \vecomega$
with the group of characters of $G$,
so that the ring $R$, which is the Veronese subring over $\vecomega$, is exactly
the $G$-invariant part of $T$.
\end{proof}

\begin{proposition} \label{pr:isolated}
If $n\geq 4$ and $R$ is a hypersurface,
then $R$ has an isolated singularity if and only if $R=T$.
\end{proposition}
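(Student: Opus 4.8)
The plan is to use \pref{lm:invariant_ring}, which identifies $R$ with the invariant ring $T^G$, and to read off the singularities of $\Spec R = \Spec T / G$ from the diagonal action of the finite group $G$ on the Brieskorn--Pham hypersurface $V = \Spec T = \{ F = 0 \} \subset \bC^n$, where $F = \sum_{i=1}^n X_i^{p_i}$. Since $\sum_i 1/p_i < 1$ forces $p_i \ge 2$ for every $i$, the Jacobian ideal $(p_i X_i^{p_i - 1})_i$ has radical $(X_1, \ldots, X_n)$, so $V$ is smooth away from the origin and has an isolated singularity there. Moreover $R = T^G = T$ if and only if $G$ is trivial, since a nontrivial $g = \diag(\alpha_1, \ldots, \alpha_n) \in G$ has some $\alpha_k \ne 1$ and hence does not fix $X_k \in T$. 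In particular, if $R = T$ then $\Spec R = V$ has an isolated singularity, which is one of the two implications.

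For the converse I would show that $G \ne 1$ forces the singular locus of $\Spec R$ to be positive-dimensional. The group $G$ is trivial exactly when the $p_i$ are pairwise coprime (indeed $|G| = \prod_i p_i / N$), so $R \ne T$ yields distinct indices $k_1, k_2$ with $d := \gcd(p_{k_1}, p_{k_2}) > 1$. Taking a primitive $d$-th root of unity $\zeta$ and setting $\alpha_{k_1} = \zeta$, $\alpha_{k_2} = \zeta^{-1}$, and $\alpha_k = 1$ for $k \notin \{ k_1, k_2 \}$ produces a nontrivial $g \in G$ with support $\{ k_1, k_2 \}$. Its fixed locus $\{ X_{k_1} = X_{k_2} = 0 \} \cap V$ is a Brieskorn--Pham hypersurface in the remaining $n - 2$ variables, hence of dimension $n - 3 \ge 1$ because $n \ge 4$.

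The crux is to show that every nonzero point $p$ of this fixed locus maps to a singular point of the quotient. Let $\pi \colon V \to V/G = \Spec R$ be the quotient map and $p \in V \setminus \{ 0 \}$ a point with nontrivial stabilizer $G_p$. Since $p$ is a smooth point of $V$, the completed local ring of $\Spec R$ at $\pi(p)$ is the ring of $G_p$-invariants of a formal power series ring on which $G_p$ acts linearly through its representation on $T_p V$; by the Chevalley--Shephard--Todd theorem this is regular if and only if $G_p$ acts as a group generated by pseudoreflections. The key point is that no nontrivial element of $G$ can act as a pseudoreflection on $T_p V$: any $h = \diag(\beta_1, \ldots, \beta_n) \in G_p \setminus \{ 1 \}$ satisfies $\prod_i \beta_i = 1$ and therefore has at least two indices $i$ with $\beta_i \ne 1$; at each such index the corresponding coordinate of $p$ vanishes (as $h \in G_p$), so $\partial F / \partial X_i$ vanishes at $p$ and the eigendirection $e_i$ lies in $T_p V$. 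Thus $h$ fixes a subspace of $T_p V$ of codimension at least two and is not a pseudoreflection, so $G_p$ contains no pseudoreflection and, being nontrivial, is not generated by pseudoreflections; hence $\pi(p)$ is singular.

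Applying this to the points of the fixed locus of the element $g$ constructed above, whose stabilizers contain $g$, and using that $\pi$ is finite with $\pi^{-1}(\pi(0)) = \{ 0 \}$, I conclude that the singular locus of $\Spec R$ contains the image of a variety of dimension $n - 3 \ge 1$, so the singularity is not isolated. The main obstacle is the pseudoreflection analysis of the previous paragraph: one has to combine the constraint $\prod_i \alpha_i = 1$ with the vanishing of the relevant partial derivatives to guarantee that every nontrivial stabilizer genuinely contributes a codimension-at-least-two fixed subspace, and hence a singular point of the quotient.
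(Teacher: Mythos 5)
Your proof is correct, and it follows the same geometric strategy as the paper: identify $R$ with the invariant ring $T^G$ via \pref{lm:invariant_ring}, extract from $R \ne T$ a pair of indices whose exponents share a common factor $>1$, and observe that the locus $\{X_i = X_j = 0\} \cap \Spec T$, of dimension $n-3 \ge 1$, consists of points with nontrivial stabilizer and hence maps to singular points of the quotient. Two details differ. First, you deduce the existence of the non-coprime pair from $|G| = \prod_i p_i / N$ and faithfulness of the $G$-action, whereas the paper gets it from \pref{lm:power} and \pref{cr:piqi} ($R \subsetneq T$ forces some $q_i = \gcd(p_i, N_i) \ne 1$); both are valid, and yours is self-contained within Section 3. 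Second, and more substantively, the paper computes the stabilizer of a point of $P$ exactly---a cyclic group of order $s$ embedded in $\SL_2(\bC)$ acting on the two transverse coordinates---and concludes that $\Spec R$ has a non-isolated family of $A_{s-1}$-singularities along $P/G$, leaving implicit the fact that such a quotient is genuinely singular. Your Chevalley--Shephard--Todd argument makes exactly this point explicit and in greater generality: the constraint $\prod_i \beta_i = 1$, combined with the vanishing of the coordinates on which a stabilizing element acts nontrivially, shows that no nontrivial element of any stabilizer acts as a pseudoreflection on the tangent space, so every nonzero point with nontrivial stabilizer maps to a singular point. Your version buys rigor and applies uniformly to all stabilizers; the paper's buys sharper information, namely the precise transverse singularity type $A_{s-1}$.
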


\begin{proof}
The `if' part is clear since $T$ has an isolated singularity.
To prove the `only if' part,
assume that $R \subsetneq T$.
Then we have $s := \gcd(p_i,p_j) \ne 1$
for some $1 \le i < j \le n$ by \pref{lm:power} and \pref{cr:piqi}.
Define a subset of
\begin{align}
 \Spec T = \{ (X_1, \ldots, X_n) \in \bA^n \mid X_1^{p_1} + \cdots + X_n^{p_n} = 0 \}
\end{align}
by
\begin{align}
 P = \{ (X_1, \ldots, X_n) \in \Spec T \mid X_i = X_j = 0 \text{ and }
  X_k \ne 0 \text{ for any } k \ne i, j \}.
\end{align}
Then the stabilizer subgroup of any point in $P$
with respect to the action of $G$
is given by
\begin{align}
 \lc \diag(\alpha_1,\ldots,\alpha_n) \relmid
 \alpha_i^{s}=1, \ 
 \alpha_i \alpha_j=1 \text{ and }
 \alpha_k=1 \text { for any } k \ne i, j \rc.
\end{align}
This is isomorphic to a cyclic subgroup of $\SL_2(\bC)$,
so that $\Spec R = (\Spec T)/G$ has a non-isolated family
of $A_{s-1}$-singularities along $P/G$.
\end{proof}

Now we prove \pref{th:isolated}:

\begin{proof}[Proof of \pref{th:isolated}]
To prove the `if' part,
assume that we have \eqref{eq:fraction}.
Then we have
\begin{align}
 \nu
  := 1 - \sum_{i=1}^n \frac{1}{p_i}
  = \frac{1}{\prod_{i=1}^n p_i}.
\end{align}
It follows that for any $i \in [1,n]$,
we have
\begin{align}
 q_i
  &:= \nu p_i N_i \\
  &= \frac{1}{\prod_{i=1}^n p_i} \cdot p_i \cdot \lcm \lc p_j \relmid j \in [1,n] \setminus \{ i \} \rc \\
  &\le \frac{1}{\prod_{i=1}^n p_i} \cdot p_i \cdot \prod_{j \in [1,n] \setminus \{ i \}} p_j \\
  &= 1.
\end{align}
This implies $q_i=1$
since $q_i$ is a positive integer
by definition.
Hence we have $R=T$ (\pref{lm:power}),
which clearly has an isolated singularity at the origin.

To prove the `only if' part,
assume that $R$ has an isolated hypersurface singularity.
Then \pref{pr:isolated} shows $R=T$,
which implies $q_i=1$ for any $i \in [1,n]$
by \pref{lm:power}.
Then one has $\nu  = 1/N$ and $\gcd(p_i, N_i) = 1$ for any $i \in [1,n]$
by \eqref{eq:nuN},
which implies $N = \lcm \lc p_i \relmid i \in [1,n] \rc = \prod_{i=1}^n p_i$
and \eqref{eq:fraction}
by \eqref{eq:df}.
\end{proof}

\section{Proof of \pref{th:a}}
 \label{sc:a}

Since we assume that $n\geq 4$ and $R$ is a hypersurface, we have
\begin{equation} \label{eq:nu_N_is_1}
 \nu:=1-\sum_{i=1}^{n}\frac{1}{p_i}=\frac{1}{N}
\end{equation}
 by \pref{pr:nu_N}.
Define a function
$
 m \colon \bZ \to \bZ
$
by
\begin{align} \label{eq:m}
 m(k)
 := \ell(k \vecomega)
 =k-\sum_{i=1}^{n} \myceil{\frac{k}{p_i}},
\end{align}
where the function
$
 \ell \colon L \to \bZ
$
is defined by \eqref{eq:ka}.
Recall that we have
\begin{equation}
 R_k=\Mbar(k \vecomega) \cdot
  \lb \bC[\Ybar_1,\ldots,\Ybar_n] / \lb \Ybar_1+\cdots+\Ybar_n \rb \rb_{m(k)},
\end{equation}
 where $\Mbar(k \vecomega)$ is defined by \eqref{eq:Mbar}.
Therefore the Hilbert series of $R$ is given by
\begin{align} \label{g_j}
 F(t)
  := \sum_{k=0}^\infty (\dim_\bC R_k) \, t^k
  = \sum_{k=0}^\infty c(k) t^n,
\end{align}
where
\begin{align}
 c(k) = \begin{cases}
  \displaystyle{\binom{m(k)+n-2}{n-2}} & m(k) \geq 0, \\
  0 & m(k)<0.
 \end{cases}
\end{align}
We can write
\begin{align}
 F(t) =\sum_{j=0}^{N-1} g_j(t) t^j,
\end{align}
where
\begin{align}
 g_j(t) = \sum_{k=0}^\infty c(j+Nk) t^{Nk}.
\end{align}
It follows from \eqref{eq:nu_N_is_1} and \eqref{eq:m} that
\begin{align}
 m(j+N k)
 =m(j)+\nu N k=m(j)+k.
\end{align}
Hence we have
\begin{align}
 c(j+Nk) = \begin{cases}
  \displaystyle{\binom{m(j)+k+n-2}{n-2}} & m(j)+k \geq 0, \\
  0 & m(j)+k<0.
 \end{cases}
\end{align}
Therefore, by the following \pref{lm:value_m_k},
\begin{align} \label{eq:gj}
 g_j(t) = \sum_{k=0}^\infty \binom{k+n-2}{n-2} t^{N(k-m(j))}.
\end{align}

\begin{lemma} \label{lm:value_m_k}
We have $m(0)=0$, $m(1)=-(n-1)$, and
\begin{align}
 -(n-2) \leq m(k) \leq 0
\end{align}
for $2 \leq k \leq N-1$.
\end{lemma}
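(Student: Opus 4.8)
The plan is to work directly from the closed formula $m(k)=k-\sum_{i=1}^{n}\myceil{k/p_i}$ in \eqref{eq:m} and to convert the ceilings into the residues $a_i(k\vecomega)$ from \eqref{eq:ka}. Recall from the proof of \pref{lm:power} that $a_i(k\vecomega)=p_i\myceil{k/p_i}-k\in[0,p_i-1]$, so $\myceil{k/p_i}=(k+a_i(k\vecomega))/p_i$. Substituting this and invoking $\nu=1/N$ from \eqref{eq:nu_N_is_1}, I would rewrite
\begin{align}
 m(k)
 = k\lb 1-\sum_{i=1}^{n}\frac{1}{p_i}\rb-\sum_{i=1}^{n}\frac{a_i(k\vecomega)}{p_i}
 = \frac{k}{N}-\sum_{i=1}^{n}\frac{a_i(k\vecomega)}{p_i}.
\end{align}
This identity is the workhorse of the argument: it splits the integer $m(k)$ into the small positive term $k/N$ and a sum of genuine fractions, which is exactly what upgrades the crude estimate $\myceil{x}\ge x$ to something sharp enough.

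The two special values follow immediately. For $k=0$ every term vanishes, giving $m(0)=0$; for $k=1$ each $p_i\ge 2$ forces $\myceil{1/p_i}=1$, so $m(1)=1-n=-(n-1)$. For $2\le k\le N-1$ the upper bound is read off at once: since $a_i(k\vecomega)\ge 0$ one has $m(k)\le k/N\le (N-1)/N<1$, and as $m(k)$ is an integer this gives $m(k)\le 0$. For the lower bound I would use the opposite extreme $a_i(k\vecomega)\le p_i-1$ together with
\begin{align}
 \sum_{i=1}^{n}\frac{p_i-1}{p_i}
 = n-\sum_{i=1}^{n}\frac{1}{p_i}
 = n-\lb 1-\frac{1}{N}\rb
 = n-1+\frac{1}{N},
\end{align}
where the middle equality again uses $\nu=1/N$. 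Plugging this into the identity yields
\begin{align}
 m(k)\ge \frac{k}{N}-\lb n-1+\frac{1}{N}\rb=\frac{k-1}{N}-(n-1).
\end{align}
For $k\ge 2$ the term $(k-1)/N$ is strictly positive, so $m(k)>-(n-1)$, and integrality of $m(k)$ forces $m(k)\ge-(n-2)$, which completes the range $2\le k\le N-1$.

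The step I expect to require the most care is precisely the lower bound. The naive inequality $\myceil{x}\ge x$ by itself only gives $m(k)>-n$, which is two units short of the claim; the extra room comes entirely from replacing the ceilings by the exact residues $a_i(k\vecomega)$ and using \emph{both} that $k/N<1$ on the nose for $k\le N-1$ \emph{and} that $\sum_i(p_i-1)/p_i$ equals $n-1+1/N$ rather than being merely bounded by $n$. Everything else is bookkeeping. Note that the standing hypotheses $n\ge 4$ and that $R$ is a hypersurface enter only through \pref{pr:nu_N}, i.e.\ through the two uses of $\nu=1/N$ in the displays above.
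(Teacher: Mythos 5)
Your proof is correct and follows essentially the same route as the paper: your bounds $0 \le a_i(k\vecomega) \le p_i-1$ are precisely the ceiling estimates $k/p_i \le \myceil{k/p_i} \le k/p_i + (p_i-1)/p_i$ that the paper uses, leading to the identical upper bound $m(k)\le k/N<1$ and lower bound $m(k)\ge \nu(k-1)-(n-1)>-(n-1)$, finished off by integrality in both cases. The rewriting via the residues $a_i(k\vecomega)$ is a cosmetic repackaging, not a different argument.
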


\begin{proof}
It is clear that $m(0)=0$ and $m(1)=-(n-1)$.
For $k\leq N-1$, we have
\begin{align*}
 m(k)
  \le k - \sum_{i=1}^n \frac{k}{p_i}
  = \nu k
  = \frac{k}{N}
  < 1,
\end{align*}
so that $m(k)\leq 0$.
On the other hand, for $k\geq 2$, we have
\begin{align*}
 m(k)
  \ge k - \sum_{i=1}^n \lb \frac{k}{p_i} + \frac{p_i - 1}{p_i} \rb
  = \nu k-n-(\nu - 1)
  = \nu(k-1)-(n-1)
  > -(n-1),
\end{align*}
so that $m(k) \ge -(n-2)$.
\end{proof}


\begin{lemma} \label{lm:deg_F}
$\lb 1-t^N \rb^{n-1} F(t)$ is a polynomial of degree $(n-1)N+1$.
\end{lemma}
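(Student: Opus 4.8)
The plan is to evaluate each $g_j(t)$ in closed form using the negative binomial identity and then assemble the pieces. The key input is the formal power series identity
\begin{align}
 \sum_{k=0}^\infty \binom{k+n-2}{n-2} s^k = \frac{1}{(1-s)^{n-1}}.
\end{align}
Substituting $s = t^N$ into the expression \eqref{eq:gj} for $g_j(t)$, the factor $t^{-N m(j)}$ comes out of the sum and one obtains
\begin{align}
 g_j(t) = \frac{t^{-N m(j)}}{(1-t^N)^{n-1}}.
\end{align}
Here I would emphasize that $-N m(j) \ge 0$ for every $j \in [0,N-1]$: this is exactly the content of the inequality $m(j) \le 0$ in \pref{lm:value_m_k}, and it guarantees that $t^{-Nm(j)}$ is an honest monomial rather than a negative power of $t$.

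Summing $g_j(t) t^j$ over $j \in [0,N-1]$ then clears the common denominator, and I would record the resulting clean formula
\begin{align} \label{eq:poly_form}
 \lb 1-t^N \rb^{n-1} F(t) = \sum_{j=0}^{N-1} t^{\, j - N m(j)}.
\end{align}
The right-hand side of \eqref{eq:poly_form} is manifestly a polynomial in $t$ with nonnegative exponents, which already establishes the first half of the claim. The remaining task is to pin down its degree.

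For the degree I would simply maximize the exponent $j - N m(j)$ over $j \in [0,N-1]$, using the explicit values from \pref{lm:value_m_k}. At $j=0$ the exponent is $0$, and at $j=1$, where $m(1) = -(n-1)$, it equals $1 + N(n-1) = (n-1)N + 1$. For $2 \le j \le N-1$ the bound $m(j) \ge -(n-2)$ gives $j - N m(j) \le (N-1) + N(n-2) = (n-1)N - 1$, which is strictly smaller. Hence the maximal exponent $(n-1)N+1$ is attained only at $j=1$; since all terms in \eqref{eq:poly_form} carry coefficient $+1$, no cancellation can occur at the top degree, so the coefficient of $t^{(n-1)N+1}$ is exactly $1$ and the degree is precisely $(n-1)N+1$.

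There is no genuine obstacle here: the argument is a direct computation once the negative binomial identity is invoked. The only point requiring care — and the place where \pref{lm:value_m_k} does real work — is the bookkeeping of exponents, namely verifying that every $j - N m(j)$ is nonnegative (so \eqref{eq:poly_form} is a polynomial) and that the unique maximizer is $j=1$ (so the degree is as stated rather than merely bounded above by $(n-1)N+1$).
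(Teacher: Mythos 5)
Your proof is correct and follows essentially the same route as the paper: apply the negative binomial identity $\sum_{k\ge 0}\binom{k+n-2}{n-2}s^k = (1-s)^{-(n-1)}$ with $s=t^N$ to collapse $(1-t^N)^{n-1}F(t)$ into $\sum_{j=0}^{N-1} t^{\,j-N m(j)}$, then invoke \pref{lm:value_m_k}. The only difference is that you spell out the exponent bookkeeping (nonnegativity of all exponents and the unique maximizer $j=1$ giving degree $(n-1)N+1$), which the paper leaves implicit in its final appeal to \pref{lm:value_m_k}.
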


\begin{proof}
By \pref{eq:gj}, we have
\begin{align}
 \lb 1-t^N \rb^{n-1} F(t)
 &= \lb 1-t^N \rb^{n-1} \sum_{j=0}^{N-1} g_j(t) t^j \\
 &= \lb 1-t^N \rb^{n-1} \sum_{j=0}^{N-1} t^j
  \sum_{k=0}^\infty \binom{k+n-2}{n-2} t^{N(k-m(j))} \\
 &= \sum_{j=0}^{N-1} t^{j-N \cdot m(j)} \lb 1-t^N \rb^{n-1}
  \sum_{k=0}^\infty \binom{k+n-2}{n-2} t^{N k} \\
 &= \sum_{j=0}^{N-1} t^{j-N \cdot m(j)},
  \label{F_sum}
\end{align}
which is a polynomial of degree $(n-1)N+1$ by \pref{lm:value_m_k}.
\end{proof}

Now we prove \pref{th:a}:

\begin{proof}[Proof of \pref{th:a}]
If $R$ is generated by elements
of degrees $a_1, \ldots, a_n$
with one relation of degree $h$,
then the Hilbert series of $R$ is given by
\begin{align} \label{F_prod}
 F(t)=\frac{1-t^h}{\prod_{i=1}^n \lb 1-t^{a_i} \rb}.
\end{align}
By \pref{lm:deg_F}, we have
\begin{align}
 h=\sum_{i=1}^n a_i + 1.
\end{align}
This concludes the proof of \pref{th:a}. 
\end{proof}

\begin{remark}
By (\ref{F_prod}), we have
\begin{align}
 \lim_{t \to 1} (1-t)^{n-1} F(t)
 =\lim_{t \to 1} \frac{1+t+\cdots+t^{h-1}}{\prod_{i=1}^n (1+t+\cdots+t^{a_i-1})}
 =\frac{h}{\prod_{i=1}^n a_i}.
\end{align}
Similarly, by (\ref{F_sum}), we have
\begin{align}
 \lim_{t \to 1} (1-t)^{n-1} F(t)
  &= \lim_{t \to 1} \frac{(1-t)^{n-1}}{(1-t^N)^{n-1}} \sum_{j=0}^{N-1} t^{j-N \cdot m(j)} \\
  &=\frac{1}{N^{n-1}} \cdot N \\
  &=\frac{1}{N^{n-2}}.
\end{align}
Hence the following equality holds:
\begin{align}
 \frac{h}{\prod_{i=1}^n a_i}=\frac{1}{N^{n-2}}.
\end{align}
A related discussion can be found
in \cite[Theorem 2.6]{MR586722}.
\end{remark}

\bibliographystyle{amsalpha}
\bibliography{bibs}

\noindent
Kenji Hashimoto

School of Mathematics,
Korea Institute for Advanced Study,
85 Hoegiro,
Dongdaemun-gu,
Seoul,
130-722,
Korea

{\em e-mail address}\ : \ hashimoto@kias.re.kr

\ \vspace{-7mm} \\

\noindent
Hwayoung Lee

Department of Mathematical Sciences,
Research Institute of Mathematics,
Seoul National University,
Gwanak-ro 1,
Gwanak-gu,
Seoul
151-747,
Korea

{\em e-mail address}\ : \ hlee014@snu.ac.kr 

\ \vspace{-7mm} \\

\noindent
Kazushi Ueda

Department of Mathematics,
Graduate School of Science,
Osaka University,
Machikaneyama 1-1,
Toyonaka,
Osaka,
560-0043,
Japan.

{\em e-mail address}\ : \  kazushi@math.sci.osaka-u.ac.jp
\ \vspace{0mm} \\

\end{document}